\documentclass[reqno,12pt]{amsart}
\usepackage{hyperref}
\usepackage{amsmath,amsthm,amsfonts,amssymb}
\usepackage[cp1251]{inputenc}
   \usepackage[english]{babel}

\date{} \setlength{\textwidth}{15truecm}
\setlength{\textheight}{24truecm}
\setlength{\oddsidemargin}{0cm}
\setlength{\evensidemargin}{0cm}
\setlength{\topmargin}{-25pt}

\newtheorem{theorem}{Theorem}[section]
\newtheorem{lemma}[theorem]{Lemma}
\newtheorem{proposition}[theorem]{Proposition}
\newtheorem{remark}[theorem]{Remark}
\newtheorem{corollary}[theorem]{Corollary}
\newtheorem{example}[theorem]{Example}
\numberwithin{equation}{section}

\let\svthefootnote\thefootnote
\newcommand\freefootnote[1]{%
  \let\thefootnote\relax%
  \footnotetext{#1}%
  \let\thefootnote\svthefootnote%
}

\begin{document}

\begin{title}[Kantorovich type topologies]{Kantorovich type topologies on spaces of measures and convergence of barycenters}

\end{title}
\maketitle

\centerline{{\large Konstantin A. Afonin}}

\vskip .1in

\centerline{\footnotesize{ Department of Mechanics and Mathematics, Lomonosov Moscow State
University, Moscow, Russia;}}
\centerline{\footnotesize{ Moscow Center of Fundamental and Applied Mathematics, Moscow, Russia
}}

\vskip .2in

\centerline{{\large Vladimir I. Bogachev$^{*}$\freefootnote{$^{*}$corresponding author}}}

\vskip .1in


\centerline{\footnotesize{Department of Mechanics and Mathematics, Lomonosov Moscow State
University, Moscow, Russia;}}

\centerline{\footnotesize{
National Research University Higher School of Economics, Moscow, Russia
}}

\vskip .3in

{\footnotesize Abstract.
We study two topologies $\tau_{KR}$ and $\tau_K$ on the space of measures on a completely regular space
generated by Kantorovich--Rubinshtein and Kantorovich seminorms analogous to their classical norms in the case of a metric
space. The Kantorovich--Rubinshtein topology $\tau_{KR}$ coincides with the weak topology on nonnegative measures
and on bounded uniformly tight sets of measures.
A~sufficient condition is given for the compactness in the Kantorovich topology.
We show that for  logarithmically  concave  measures and stable measures weak convergence implies
convergence in the Kantorovich topology.
We also obtain an efficiently verified condition for convergence of the barycenters
of Radon measures from a  sequence or net weakly converging on a locally convex space.
As an application it is shown that for weakly convergent
 logarithmically  concave  measures and stable measures convergence of their barycenters
holds without additional conditions. The same is true for measures given by polynomial
densities of a fixed degree with respect to logarithmically  concave measures.
}

\freefootnote{2020 Mathematics Subject Classification: 28A33, 28C15, 46G12, 60B10}

\freefootnote{Keyword: Radon measure, Kantorovich distance, weak topology,
barycenter, logarithmically  concave  measure, Gaussian measure}

\freefootnote{This research is supported by the
Russian Science Foundation  Grant 22-11-00015}

\section{Introduction}

The geometry and topology of spaces of measures on metric spaces have become an important direction in probability theory
over the past two decades. A particular role in these studies is played by the Kantorovich metrics $W_p$ (also called
Wasserstein metrics in part of the literature) and similar Kantorovich--Rubinshtein (or Fortet--Mourier) metrics,
see, for example, \cite{AGS}, \cite{Back}, \cite{Back2}, \cite{Barbie}, \cite{BobLed},
\cite{B18}, \cite{Dolb}, \cite{Hellwig}, \cite{LieroMS},
\cite{Sturm06},
 \cite{VerZP}, \cite{Vill2}.
These metrics are traditionally considered on probability or nonnegative measures, where they are related to the weak topology,
but are also defined on all measures, although on the whole space of signed measures they induce topologies not comparable
with the weak one. However, for more general spaces analogous constructions have not been studied in detail,
although they were already considered by Castaing, Raynaud de Fitte and Valadier \cite[Section~3.4]{Cast}
in the framework of Young measures.
The goal of our paper is to further develop Kantorovich type seminorms in the case of completely regular spaces.
We consider two natural locally convex topologies $\tau_{K}$ and $\tau_{KR}$ on the space of Radon measures, corresponding to the classical
Kantorovich and Kantorovich--Rubinshtein (or Fortet--Mourier) norms. We show that the topology $\tau_{KR}$ coincides with
the weak topology on the cone of nonnegative measures and also on bounded uniformly tight sets of measures. For a separable space,
it coincides with the weak topology on weakly compact sets. A~simple  sufficient compactness
condition is obtained for~$\tau_K$,
which combines the uniform tightness with some uniform integrability of quasi-metrics defining the topology.
Kantorovich type seminorms can be used for analysis of barycenters. Along these lines we show convergence of the
barycenters of weakly convergent logarithmically concave and stable measures. Moreover, the same is proved for measures given
by polynomial densities of a fixed degree with respect to logarithmically concave measures.

\section{Kantorovich and Kantorovich--Rubinstein seminorms}

We recall  (see details in \cite{Eng})
that the topology of a completely regular space $X$ is generated by a family of pseudo-metrics  $\Pi$ (a pseudo-metric  differs
from a metric by the property that it can be zero on distinct elements).

A finite nonnegative Borel measure $\mu$ on a topological space is called Radon if for
every Borel set $B$ and every $\varepsilon>0$ there exists a  compact set $K\subset B$ such that
$\mu(B\backslash K)< \varepsilon$. A signed Borel measure $\mu$ is called Radon if
such is its total variation~$|\mu|=\mu^{+}+\mu^{-}$, where $\mu^{+}$ and $\mu^{-}$ are the positive and negative parts of the measure~$\mu$.
On Radon measures, see \cite{B07}.

A set $M$ of Radon measures on $X$ is called uniformly tight if
for every $\varepsilon>0$ there is a compact set $K$ such that $|\mu|(X\backslash K)<\varepsilon$
for all $\mu\in M$.

Let $\mathcal{M}_r(X)$ denote the linear space of all Radon measures on~$X$ and let
$\mathcal{M}_r^{+}(X)$ and $\mathcal{P}_r(X)$ be its subsets consisting of nonnegative measures and probability measures, respectively.

Let $\mathcal{M}_r^{\Pi}(X)$ denote the subset of $\mathcal{M}_r(X)$ consisting of measures $\mu$ for which the function
$x\mapsto p(x,x_0)$ belongs to $L^1(|\mu|)$ for all $p\in \Pi$ for some (then for all) $x_0\in X$.
Note that this class depends on our choice of $\Pi$; say, for $X=(0,1)$ with the usual metric and the family $\Pi$ reducing to it,
all measures belong to $\mathcal{M}_r^{\Pi}(X)$, but the situation changes if for $\Pi$ we take all continuous metrics.
In the case of a normed space $X$ for $\Pi$  we shall take only the norm and in the case of a locally convex space
for $\Pi$ we shall take a collection of seminorms defining the topology (which amounts to taking the collection
of all continuous seminorms); in these cases we shall write
$\mathcal{M}_r^{1}(X)$ in place of $\mathcal{M}_r^{\Pi}(X)$ and speak of measures with finite first moment.

The image of a Radon measure $\mu$ on $X$ under a continuous mapping $F$ to a topological space $Y$ is the Radon
measure $\mu\circ F^{-1}$ given by the equality
$$
\mu\circ F^{-1}(B)=\mu(F^{-1}(B)).
$$

The weak topology on $\mathcal{M}_r(X)$ is the topology of duality with the space $C_b(X)$ of bounded continuous
functions, which is generated by all  seminorms of the form
$$
\mu\mapsto \biggl|\int_X f\, d\mu\biggr|, \quad f\in C_b(X).
$$

The space $\mathcal{M}_r(X)$ of all Radon measures on a metric space $(X,d)$ can be equipped with
the classical Kantorovich--Rubinstein norm
$$
\|\mu\|_{KR,d}=\sup \biggl\{ \int f\, d\mu\colon f\in {\rm Lip}_1(d), \ |f|\le 1\biggr\},
$$
where ${\rm Lip}_1(d)$ is the class of $1$-Lipschitz functions $f$, i.e.,
$|f(x)-f(y)|\le d(x,y)$.
The subspace $\mathcal{M}_r^1(X)$ of all
 measures for which for some $x_0\in X$
(then for all $x_0$) the function $d(x,x_0)$ is integrable can be equipped with the Kantorovich norm
$$
\|\mu\|_{K,d}=\sup\biggl\{ \int f\, d\mu\colon f\in {\rm Lip}_1(d),\, f(x_0)=0\biggr\}+|\mu(X)|.
$$

The topology generated by the Kantorovich--Rubinstein norm coincides with the weak topology on the cone of nonnegative
measures and also on compact sets in the weak topology, although on the whole space these two topologies are uncomparable in nontrivial cases.

For a general completely regular space $X$ both norms have natural analogs
in the form of collections of seminorms: for every pseudo-metric $d$ one can  define $\|\mu\|_{KR,d}$ and $\|\mu\|_{K,d}$
on the  corresponding spaces. It is shown below that the topology $\tau_{KR}$ generated by all such Kantorovich--Rubinstein type
seminorms  coincides with the weak topology on the cone of nonnegative
measures and also on weakly  compact sets (for a separable space). We give a simple sufficient condition for the compactness
in the topology $\tau_{KR}$ and  the similarly defined topology $\tau_{K}$. In the case of a locally convex space $X$ we deduce from this
convergence of the barycenters of weakly convergent measures (a precise formulation is give below). It is also shown that if $X$
is a Fr\'echet space, then every set  compact in the topology $\tau_{K}$ is concentrated on some
separable reflexive Banach space~$E$  compactly embedded into $X$ and is compact in the
topology $\tau_{K}$ on $E$ generated by the stronger topology from~$E$.

Let us connect the topology $\tau_{KR}$ with the weak topology. The first assertion in the next theorem can be derived
from \cite[Lemma~1.3.3]{Cast}, but we include a short justification for completeness.

\begin{theorem}\label{KR}
Suppose that the topology in $X$ is generated by a family of pseudometrics~$\Pi$.
Then the weak topology on the set $\mathcal{M}^+_r(X)$ is
generated by the family of seminorms $\|\cdot\|_{KR, p}$, $p\in \Pi$.

In addition, the weak topology is generated by these seminorms on every  bounded in variation and uniformly tight set
in $\mathcal{M}_r(X)$.
\end{theorem}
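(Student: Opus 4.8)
The plan is to treat the two assertions together: in each case I would show that the weak topology and $\tau_{KR}$ have the same convergent nets, by proving two inclusions. As a preliminary I would assume that $\Pi$ is directed under finite maxima (this does not change the topology of $X$ and holds in the cases of interest) and closed under multiplication by positive constants (harmless for $\tau_{KR}$, since $\|\mu\|_{KR,cp}\le\max(c,1)\|\mu\|_{KR,p}$). Write $\mathcal F_p=\{f\in\mathrm{Lip}_1(p)\colon|f|\le1\}$, so $\|\mu\|_{KR,p}=\sup_{f\in\mathcal F_p}\int f\,d\mu$; testing against $f\equiv\pm1$ gives $\|\mu\|_{KR,p}\ge|\mu(X)|$, so in either statement convergence of all seminorms $\|\mu_\alpha-\mu\|_{KR,p}$ already forces $\mu_\alpha(X)\to\mu(X)$ and, after rescaling $g$ into some $\mathcal F_{cp}$, $\int g\,d(\mu_\alpha-\mu)\to0$ for every bounded $p$-Lipschitz $g$.

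\textbf{Weak convergence $\Rightarrow$ convergence of the KR-seminorms.} For a bounded, uniformly tight $M$ this is routine: given $p,\varepsilon$, fix a compact $K$ with $|\nu|(X\setminus K)<\varepsilon$ for all $\nu\in M$, split $\int f\,d(\mu_\alpha-\mu)$ over $K$ and its complement (the tails are $\le2\varepsilon$ uniformly in $f\in\mathcal F_p$), and on $K$ use that $\mathcal F_p|_K$ is totally bounded in $C(K)$ (Arzelà–Ascoli: uniformly bounded and $p$-equicontinuous) to replace $f$ by one member of a finite $\varepsilon$-net, extended to $C_b(X)$, for which weak convergence applies. For $\mathcal M_r^+(X)$ the $\mu_\alpha$ need not be uniformly tight, so I would argue by contradiction: if $\|\mu_\alpha-\mu\|_{KR,p}\not\to0$, pass to a subnet with $f_\alpha\in\mathcal F_p$, $\int f_\alpha\,d(\mu_\alpha-\mu)\ge\varepsilon$; by Arzelà–Ascoli pass to a further subnet with $f_\alpha\to f$ uniformly on compacta, $f\in\mathcal F_p$; Radonness of $\mu$ gives $\int f_\alpha\,d\mu\to\int f\,d\mu$ and weak convergence gives $\int f\,d\mu_\alpha\to\int f\,d\mu$, whence $\limsup_\alpha\int(f_\alpha-f)^+\,d\mu_\alpha\ge\varepsilon$. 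The decisive step is then the Lipschitz estimate $(f_\alpha-f)^+(x)\le\sup_K(f_\alpha-f)^+ +\min(2p(x,K),2)$, valid for every compact $K$; integrating against $\mu_\alpha$ and using $\min(2p(\cdot,K),2)\in C_b(X)$, $\mu_\alpha(X)\to\mu(X)$ and weak convergence gives $\limsup_\alpha\int(f_\alpha-f)^+\,d\mu_\alpha\le\int\min(2p(\cdot,K),2)\,d\mu\le2\mu(X\setminus K)$, and choosing $K$ with $\mu(X\setminus K)<\varepsilon/2$ yields the contradiction.

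\textbf{Convergence of the KR-seminorms $\Rightarrow$ weak convergence.} Assume $\|\mu_\alpha-\mu\|_{KR,p}\to0$ for all $p$; I want $\int f\,d\mu_\alpha\to\int f\,d\mu$ for $f\in C_b(X)$, and, absorbing an additive constant via mass convergence, I may take $0\le f\le1$. For $M$: fix a compact $K$ with $|\nu|(X\setminus K)<\varepsilon$ on $M$; since $\Pi$ is directed, $f|_K$ is uniformly continuous for a single $p\in\Pi$, so the McShane regularization $g(x)=\inf_{y\in K}(f(y)+Lp(x,y))$, truncated and with $L$ large, is bounded $p$-Lipschitz with $\sup_K|f-g|<\varepsilon$; splitting $\int f\,d(\mu_\alpha-\mu)$ into $\int(f-g)\,d\mu_\alpha+\int g\,d(\mu_\alpha-\mu)-\int(f-g)\,d\mu$, the outer terms are $O(\varepsilon)$ by uniform tightness and the middle tends to $0$. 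For $\mathcal M_r^+(X)$ I would run a Portmanteau argument for a single pseudometric: for a $p$-closed set $F=\{p(\cdot,F)=0\}$ the bounded $p$-Lipschitz functions $(1-kp(\cdot,F))^+$ decrease to $\mathbf 1_F$, so $\limsup_\alpha\mu_\alpha(F)\le\mu(F)$, hence (with mass convergence) $\liminf_\alpha\mu_\alpha(G)\ge\mu(G)$ for $p$-open $G$. Given $f,t$, the open set $\{f>t\}$ contains a compact $K$ with $\mu(K)$ close to $\mu(\{f>t\})$, and since $\Pi$ is directed and generates the topology there are $p\in\Pi$, $r>0$ with $K\subseteq\{p(\cdot,K)<r\}\subseteq\{f>t\}$; as $\{p(\cdot,K)<r\}$ is $p$-open, $\liminf_\alpha\mu_\alpha(\{f>t\})\ge\mu(K)$, so $\liminf_\alpha\mu_\alpha(\{f>t\})\ge\mu(\{f>t\})$; applying this to $-f$ and using mass convergence also gives $\limsup_\alpha\mu_\alpha(\{f\ge t\})\le\mu(\{f\ge t\})$. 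Finally I would sandwich $f$ between $\tfrac1k\sum_{j=1}^k\mathbf 1_{\{f\ge j/k\}}$ and $\tfrac1k+\tfrac1k\sum_{j=1}^k\mathbf 1_{\{f>j/k\}}$, take $\limsup_\alpha$ and $\liminf_\alpha$ of these finite sums, and let $k\to\infty$ (Riemann sums of the decreasing function $t\mapsto\mu(\{f>t\})$) to get $\int f\,d\mu_\alpha\to\int f\,d\mu$.

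\textbf{Where the difficulty sits.} The only genuinely delicate case is the first assertion, precisely because a weakly or $\tau_{KR}$-convergent net of nonnegative Radon measures need not be uniformly tight, so the clean truncation used for $M$ is unavailable. Both directions circumvent this by using that the test functions are Lipschitz for one pseudometric: in the first direction the excess $(f_\alpha-f)^+$ is dominated by the honest $C_b(X)$ function $\min(2p(\cdot,K),2)$, which vanishes near $K$; in the second direction one only compares $\mu_\alpha(\{f>t\})$ with $\mu_\alpha$ of an inner $p$-open approximation — a one-sided bound needing no estimate of the difference — and invokes Portmanteau for a single pseudometric. The supporting ingredients are Arzelà–Ascoli for the equicontinuous classes $\mathcal F_p$ and the directedness of $\Pi$, which is what provides, on a compact set, a single pseudometric dominating a prescribed continuous function and separating the compact set from a closed complement.
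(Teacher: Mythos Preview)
Your argument is correct, but it takes a different route from the paper, especially for the first assertion.

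The paper's proof of the first assertion is much shorter because it outsources the work to the metric case: for each $p\in\Pi$ it passes to the quotient metric space $X/p$ via the canonical map $\pi_p$, observes that $\|\mu\|_{KR,p}=\|\mu\circ\pi_p^{-1}\|_{KR,\hat p}$, and then uses that on a metric space weak convergence of Radon probability measures is equivalent to convergence in the Kantorovich--Rubinshtein norm. For the converse (KR $\Rightarrow$ weak) the paper notes that KR-convergence gives convergence of integrals of all functions $\min(p(\cdot,x_0),c)$; pushing forward by such a function to $[0,c]$ yields weak convergence of the image measures, hence $\mu_\alpha(U)\to\mu(U)$ for $U=\{\min(p(\cdot,x_0),c)<t\}$ with $\mu$-null boundary, and a cited criterion (Theorem~4.3.11 in \cite{B18}) finishes. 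For the second assertion the paper uses exactly your Arzel\`a--Ascoli $\varepsilon$-net argument for weak $\Rightarrow$ KR, and does not spell out the converse separately.

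Your approach is more self-contained: you never invoke the metric case or an external weak-convergence criterion. For weak $\Rightarrow$ KR on $\mathcal{M}_r^+(X)$ your key device is the Lipschitz domination $(f_\alpha-f)^+\le\sup_K(f_\alpha-f)^+ +\min(2p(\cdot,K),2)$, which lets Radonness of the single limit $\mu$ substitute for the missing uniform tightness of the net --- a genuinely different idea from the quotient reduction. For KR $\Rightarrow$ weak you run Portmanteau pseudometric by pseudometric and then assemble via Riemann sums, whereas the paper effectively runs Portmanteau on the real line after pushing forward. What the paper buys is brevity and a clean structural picture (each seminorm is literally a KR-norm on a metric quotient); what your approach buys is that every step is explicit on $X$ and nothing is deferred to the metric theory. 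Your harmless enlargement of $\Pi$ (directed under $\max$, closed under positive scalings) is legitimate, since it changes neither the topology of $X$ nor~$\tau_{KR}$.
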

\begin{proof}
Let $p\in \Pi$. We denote by $X/p$ the quotient space with the metric
$$
\widehat{p}([x]_p,[y]_p)=p(x,y),\quad [x]_p=\{y\in X\colon p(x,y)=0\}.
$$
The canonical mapping
$
\pi_p\colon x\mapsto [x]_p
$
is continuous. Note that the equality
$$
\|\mu\circ \pi_p^{-1}\|_{\widehat{p}}=\|\mu\|_{KR,p}
$$
is true.
Indeed,
for every function $f\in\mathrm{Lip}_1(\widehat{p})$ on $X/p$ the composition $f\circ\pi_p$ belongs to the set $\mathrm{Lip}_1(p)$,
so $\|\mu\circ \pi_p^{-1}\|_{\widehat{p}}\le\|\mu\|_{KR,p}$.
On the other hand, if $f\in\mathrm{Lip}_1(p)$,
then we set $g([x]_p):=f(x)$. The function $g$ is well-defined, since the function $f$ is Lipschitz in the pseudo-metric $p$.
Moreover, $g\in\mathrm{Lip}_1(\widehat{p})$ and we have
$$
\int_X f\,d\mu=\int_{X/p}g\,d(\mu\circ\pi_p^{-1}).
$$
It suffices to prove
the coincidence of the weak topology and $\tau_{KR}$ on $\mathcal{P}_r(X)$. If a net of measures $\mu_\alpha\in\mathcal{P}_r(X)$
 converges weakly to a measure $\mu\in\mathcal{P}_r(X)$,
then their images $\mu_\alpha$ under the mapping $\pi_p$ are Radon on the metric space $X/p$ and  converge weakly
to the image of~$\mu$. Therefore,
$$
\|\mu_\alpha\circ\pi_p^{-1}-\mu_\circ\pi_p^{-1}\|_{\widehat{p}}=\|\mu_\alpha-\mu\|_{KR,p}\to 0.
$$
Conversely, let $\mu_\alpha\to\mu$ with respect to all seminorms $\|\cdot\|_{KR,p}$. Then we have
convergence of the integrals of all functions of the form $f(x)=\min(p(x,x_0),c)$. This implies convergence
$\mu_\alpha(U)\to\mu(U)$ on all open sets $U$ of the form $U=\{x\colon f(x)<t\}$ with $\mu$-zero boundary,
which implies weak convergence (see \cite[Theorem~4.3.11]{B18}).

Let us prove the second assertion.
Let $S$ be a bounded and uniformly tight set in $\mathcal{M}_r(X)$. Suppose that a net of measures
 $\mu_\alpha$ from $S$  converges weakly to a measure $\mu\in \mathcal{M}_r(X)$. Let us show that for every pseudometric
$p\in \Pi$ we have convergence $\|\mu_\alpha-\mu\|_{KR,p}\to 0$. We can assume that $\|\mu_\alpha\|\le 1$
for all $\alpha$. Given $\varepsilon>0$, we can find a compact set $K$ such that $|\mu|(X\backslash K)<\varepsilon$
and $|\mu_\alpha|(X\backslash K)<\varepsilon$ for all $\alpha$. Note that the set of restrictions to $K$ of the functions
from ${\rm Lip}_1(p)$ bounded by $1$ in absolute value
 is compact in the space $C(K)$ with the sup-norm by the Arzela--Ascoli theorem. Therefore,
it contains a finite $\varepsilon$-net $f_1,\ldots,f_m$. Take an index $\alpha_0$ such that for all $\alpha\ge \alpha_0$ we have
$$
\biggl| \int_X f_i\, d\mu_\alpha - \int_X f_i\, d\mu\biggr|<\varepsilon, \quad i=1,\ldots,m.
$$
Let $f\in {\rm Lip}_1(p)$ and $|f|\le 1$. There is $f_i$ with $|f-f_i|\le \varepsilon$ on $K$. Then
$$
\biggl| \int_X f\, d\mu_\alpha - \int_X f\, d\mu\biggr|< 7\varepsilon,
$$
since the integrals over $K$ differ by at most $\varepsilon$ and the absolute values of the integrals over the complement of $K$ are
estimated by~$\varepsilon$.
\end{proof}

\begin{remark}
\rm
The topologies $\tau_{KR}$ and $\tau_K$ are introduced precisely  in the same way on the space of all Baire measures $\mathcal{M}_\sigma(X)$
or on its subspace  $\mathcal{M}_\tau(X)$ of $\tau$-additive measures (see \cite{B07}).
The previous theorem with the same proof remains valid for $\tau$-additive measures.
\end{remark}

\begin{proposition}
If a completely regular space $X$ is separable or possesses a countable collection of continuous functions separating points,
 then the weak topology coincides with the topology $\tau_{KR}$ on weakly compact sets in $\mathcal{M}_r(X)$.
\end{proposition}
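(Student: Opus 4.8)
The plan is to deduce the statement from Theorem~\ref{KR}: it suffices to check that a weakly compact set $M\subseteq\mathcal{M}_r(X)$ is bounded in variation and, after an appropriate reduction, uniformly tight, and then to invoke the coincidence of the weak topology with $\tau_{KR}$ on such sets. Boundedness in variation is automatic: regarding the measures in $M$ as continuous functionals on the Banach space $C_b(X)$, the family $M$ is pointwise bounded (for fixed $f\in C_b(X)$ the map $\mu\mapsto\int f\,d\mu$ is weakly continuous, hence bounded on $M$), so $\sup_{\mu\in M}\|\mu\|<\infty$ by the Banach--Steinhaus theorem.

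For the reduction I would use the metric quotients from the proof of Theorem~\ref{KR} (taking for $\Pi$ the family of all continuous pseudometrics on $X$). For a continuous pseudometric $p$ let $\pi_p\colon X\to X/p$ be the canonical map into the metric space $(X/p,\widehat p)$; then $\mu\mapsto\mu\circ\pi_p^{-1}$ is weakly continuous from $\mathcal{M}_r(X)$ to $\mathcal{M}_r(X/p)$, and, because $\|\mu\|_{KR,p}=\|\mu\circ\pi_p^{-1}\|_{\widehat p}$, it is also continuous from $(\mathcal{M}_r(X),\tau_{KR})$ to $\mathcal{M}_r(X/p)$ equipped with the Kantorovich--Rubinstein norm. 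Since every $f\in C_b(X)$ factors as $f=g\circ\pi_{p_f}$ for the pseudometric $p_f(x,y)=|f(x)-f(y)|$ and a function $g\in C_b(X/p_f)$, the weak topology on $\mathcal{M}_r(X)$ is precisely the initial topology for the family $\{\mu\mapsto\mu\circ\pi_p^{-1}\}_p$ when each $\mathcal{M}_r(X/p)$ carries the weak topology, whereas by definition $\tau_{KR}$ is the initial topology for the same family when each $\mathcal{M}_r(X/p)$ carries the norm topology. Consequently it is enough to show that for every $p$ the weak topology and the Kantorovich--Rubinstein norm topology coincide on the weakly compact set $M_p:=\{\mu\circ\pi_p^{-1}\colon\mu\in M\}\subseteq\mathcal{M}_r(X/p)$. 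This is where the hypothesis is used: if $X$ is separable, every $X/p$ is separable, being a continuous image of $X$; if instead $X$ carries a countable family $h_n$ of continuous functions separating points, one first transports the problem by the continuous injection $j\colon X\to\mathbb{R}^{\mathbb N}$, $j(x)=\bigl(h_n(x)/(1+|h_n(x)|)\bigr)_n$, whose image is separable metrizable --- here $\mu\mapsto\mu\circ j^{-1}$ is injective on $\mathcal{M}_r(X)$ because $j$ restricts to a homeomorphism on every compact set and the measures are Radon --- and then applies the previous reduction inside that image.

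It remains to treat a weakly compact set $N$ of Radon measures on a separable metric space $Y$. Passing to the completion $\widetilde Y$, which is Polish, the extension of a measure by zero outside $Y$ is an isometry for the Kantorovich--Rubinstein norms and is weakly continuous and injective; restricted to $N$ it is therefore a homeomorphism onto its image for the weak topologies (a continuous bijection from a compact space onto a Hausdorff space is a homeomorphism), so the problem reduces to a weakly compact set of Radon measures on a Polish space. For such a set I would invoke the Prokhorov-type result that a weakly compact set of Radon measures on a Polish space is uniformly tight (for probability measures this is the classical Prokhorov theorem; for signed measures it is contained in \cite{B18}), after which Theorem~\ref{KR} gives the coincidence of the weak topology with $\tau_{KR}$ on that set, and tracing back the reductions completes the proof. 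I expect the main point to be exactly this uniform tightness of weakly compact sets of signed Radon measures, together with verifying that the two reduction steps genuinely identify the weak and $\tau_{KR}$ topologies rather than merely comparing them.
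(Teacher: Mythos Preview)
Your overall strategy for the separable case is sound and genuinely different from the paper's. The paper does \emph{not} try to prove that the projected set $M_p$ is uniformly tight. Instead it observes that the completion $Z_p$ of $X/p$ is a separable metric space, so any weakly compact set in $\mathcal{M}_r(Z_p)$ is metrizable in the weak topology; this reduces everything to \emph{sequences}, and then the paper invokes the known fact (Pachl~\cite{Pachl79}, or \cite[Exercise~3.5.22]{B18}) that any weakly convergent sequence of Radon measures on a metric space converges in the Kantorovich--Rubinshtein norm. Your route via a signed Prokhorov theorem on Polish spaces followed by Theorem~\ref{KR} is a legitimate alternative; it trades the sequential Pachl result for the (also nontrivial) uniform tightness of weakly compact sets of signed Radon measures on Polish spaces.

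There is, however, a genuine gap in your treatment of the second hypothesis. The injection $j\colon X\to\mathbb{R}^{\mathbb{N}}$ is continuous and injective, but it is \emph{not} a topological embedding unless $X$ already carries the initial topology from the $h_n$. Consequently a continuous pseudometric $p$ on $X$ need not be of the form $p'\circ(j\times j)$ for any continuous pseudometric $p'$ on $j(X)$, so the topology $\tau_{KR}$ on $\mathcal{M}_r(X)$ is in general strictly finer than the pullback via $j_*$ of $\tau_{KR}$ on $\mathcal{M}_r(j(X))$. Thus proving that the weak and $\tau_{KR}$ topologies coincide on $j_*(M)$ only yields coincidence of the weak topology with this coarser pullback topology on $M$, not with the full $\tau_{KR}$. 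Your initial-topology reduction in the second paragraph is correct, but it demands information about \emph{every} continuous pseudometric $p$ on $X$, and the passage through $j(X)$ loses exactly those $p$ that do not factor through~$j$.

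The paper closes this gap differently: the countable family $\{h_n\}$ separating points of $X$ produces (via the generated algebra and the Radon property) a countable family of bounded continuous functions separating Radon \emph{measures}, so the compact set $S$ itself is metrizable in the weak topology. One is then reduced to sequences, and for a weakly convergent sequence the images in each $\mathcal{M}_r(X/p)$ are handled by Pachl's result on the metric space $X/p$ (no separability of $X/p$ is needed for sequences). You can repair your argument the same way: once $M$ is metrizable, apply Le~Cam's theorem to each weakly convergent sequence in $M_p$ to obtain uniform tightness, and then invoke Theorem~\ref{KR} as you intended.
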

\begin{proof}
Since on a compact space every weaker topology coincides with the original one, it suffices to verify that
 weak convergence of a net of measures from a weakly compact set $S$ implies convergence in the topology
$\tau_{KR}$ under one of our two conditions. Let $X$ be separable and $p\in \Pi$.
Then the image $X_p$ of $X$ under the  indicated factorization is a separable metric space with the completion~$Z_p$.
The image of $S$ is compact in $\mathcal{M}_r(Z_p)$. Since $Z_p$ possesses a countable collection of continuous functions separating points,
the compact image of $S$ is metrizable in the weak topology. Hence it suffices to use that
any weakly convergent sequence in $\mathcal{M}_r(Z_p)$ also converges in the Kantorovich--Rubinshtein norm
(see, e.g., \cite{Pachl79} or \cite[Exercise~3.5.22]{B18}).
The second case is similar: here the compact set $S$ itself is metrizable (because
a countable family of continuous functions separating points gives a
countable family of bounded  continuous functions separating measures), hence its image is also.
Therefore,   it suffices to verify our assertion for countable sequences of Radon
 measures on a metric space, which reduces to the  case of a separable space.
\end{proof}

Note that a similar assertion is true for the space  $\mathcal{M}_\sigma(X)$
of Baire measures on a separable space~$X$. Recall (see \cite[Theorem~4.8.3]{B18}) that a set $M$ in
 $\mathcal{M}_\sigma(X)$ is contained in a weakly compact set precisely when for every sequence
 of  functions $f_n\in C_b(X)$ pointwise decreasing to zero
 (in the formulation of the cited theorem it is mistakenly said ``converging'' in place of ``decreasing'',
 but the proof deals with decreasing sequences; the case of nonnegative measures is covered by Theorem~4.5.10
 with a correct formulation)
 one has
 $$
 \lim\limits_{n\to\infty} \sup_{\mu\in M} \biggl|\int_X f_n\, d\mu\biggr|=0.
 $$
 This criterion extends at once to weakly complete sets in the space of Radon measures
 (or in the case where all Baire measures on $X$ have Radon extensions).

There is a sufficient condition of convergence in the topology $\tau_K$.

\begin{proposition}\label{p2.4}
Suppose that a net $\{\mu_\alpha\}\subset \mathcal{M}_r^{\Pi}(X)$ converges in the topology
$\tau_{KR}$ to a
measure $\mu\in\mathcal{M}_r^{\Pi}(X)$ {\rm(}for nonnegative measures or from measures
in a bounded and uniformly tight family this is equivalent to weak convergence{\rm)}.
If every pseudo-metric $p$ from $\Pi$ satisfies the condition of
uniform integrability
$$
\lim\limits_{R\to\infty} \sup_{\alpha} \int_{\{p\ge R\}} p(x,x_0)\, |\mu_\alpha|(dx) =0 ,
$$
then $\{\mu_\alpha\}$ converges in the topology $\tau_K$. In the case of probability measures
this is also a necessary condition.

Finally, in the case of a countable sequence of measures, convergence in $\tau_{KR}$ can be replaced
with weak convergence.
\end{proposition}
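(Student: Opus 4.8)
The plan is to prove the three assertions in turn, with the sufficiency as the core and the other two deduced from it by elementary estimates. For the sufficiency: fix $p\in\Pi$ and write $p_0(x):=p(x,x_0)$, a continuous function that lies in $L^1(|\mu_\alpha|)$ for every $\alpha$ and in $L^1(|\mu|)$ since all these measures belong to $\mathcal{M}_r^{\Pi}(X)$. Given $f\in\mathrm{Lip}_1(p)$ with $f(x_0)=0$, so that $|f|\le p_0$ and $f$ is integrable against each of these measures, I would truncate it: for $R\ge 1$ put $f_R:=\max(-R,\min(R,f))$. Since the clamping map $t\mapsto\max(-R,\min(R,t))$ is $1$-Lipschitz and fixes the origin, $f_R$ is again in $\mathrm{Lip}_1(p)$, vanishes at $x_0$, is bounded by $R$, and $|f-f_R|=(|f|-R)^{+}\le p_0\,\mathbf 1_{\{p_0\ge R\}}$. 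Because $R^{-1}f_R$ is admissible in the definition of $\|\cdot\|_{KR,p}$, one has $|\int f_R\,d(\mu_\alpha-\mu)|\le R\,\|\mu_\alpha-\mu\|_{KR,p}$, so taking the supremum over such $f$ and adding $|\mu_\alpha(X)-\mu(X)|\le\|\mu_\alpha-\mu\|_{KR,p}$ (obtained from the constant function $f\equiv 1$) gives
$$
\|\mu_\alpha-\mu\|_{K,p}\le(R+1)\,\|\mu_\alpha-\mu\|_{KR,p}+\sup_{\alpha}\int_{\{p_0\ge R\}}p_0\,d|\mu_\alpha|+\int_{\{p_0\ge R\}}p_0\,d|\mu|.
$$
The first tail tends to $0$ as $R\to\infty$ by the uniform-integrability hypothesis, and the second by dominated convergence because $p_0\in L^1(|\mu|)$ — so nothing need be assumed about $\mu$ itself. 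Choosing $R$ so that both tails are $<\varepsilon$ and then using $\|\mu_\alpha-\mu\|_{KR,p}\to 0$ to absorb the first summand yields $\|\mu_\alpha-\mu\|_{K,p}<2\varepsilon$ eventually; since $p$ was arbitrary, $\mu_\alpha\to\mu$ in $\tau_K$.

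For the necessity in the probabilistic case I would test the Kantorovich seminorm against $g_R:=(p_0-R)^{+}$, which lies in $\mathrm{Lip}_1(p)$ and vanishes at $x_0$. Then $\tau_K$-convergence gives $\int g_R\,d\mu_n\le\int g_R\,d\mu+\|\mu_n-\mu\|_{K,p}$, and since $g_R\ge p_0/2$ on $\{p_0\ge 2R\}$ this sharpens to $\int_{\{p_0\ge 2R\}}p_0\,d\mu_n\le 2\int g_R\,d\mu+2\,\|\mu_n-\mu\|_{K,p}$. Given $\varepsilon>0$, one picks $R$ with $\int g_R\,d\mu<\varepsilon$ (possible because $p_0\in L^1(\mu)$), then $N$ with $\|\mu_n-\mu\|_{K,p}<\varepsilon$ for $n\ge N$, and finally enlarges the cutoff past $2R$ so as to also control the finitely many measures $\mu_1,\dots,\mu_{N-1}$ via their individual integrability of $p_0$; this produces $\sup_n\int_{\{p_0\ge T\}}p_0\,d\mu_n\le 4\varepsilon$ for all large $T$. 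I would carry this out for sequences only: the ``finitely many initial indices'' step has no analogue for a general net, and in fact the uniform-integrability condition can fail along a $\tau_K$-convergent net, so the necessity is genuinely a statement about sequences.

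For the last assertion it suffices, by the sufficiency part, to show that a weakly convergent sequence $\mu_n\to\mu$ in $\mathcal{M}_r^{\Pi}(X)$ already converges in $\tau_{KR}$. I would fix $p$, pass, as in the proof of Theorem~\ref{KR}, to the images $\nu_n:=\mu_n\circ\pi_p^{-1}$ and $\nu:=\mu\circ\pi_p^{-1}$ on the metric space $X/p$, where weak convergence is preserved and $\|\mu_n-\mu\|_{KR,p}=\|\nu_n-\nu\|_{\widehat p}$. Since a Radon measure on a metric space is concentrated on a separable set, the countably many measures $\nu_n,\nu$ are jointly concentrated on a separable subspace; passing to its completion $Z$, which is Polish, these measures extend to Radon measures on $Z$, weak convergence persists, and the Kantorovich--Rubinshtein seminorm is unaffected by the enlargement. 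On a Polish space a weakly convergent sequence of Radon measures converges in the Kantorovich--Rubinshtein norm (see \cite{Pachl79} or \cite[Exercise~3.5.22]{B18}), so $\|\nu_n-\nu\|_{\widehat p}\to 0$, and letting $p$ run over $\Pi$ gives $\tau_{KR}$-convergence.

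The only genuinely substantive mechanism is the truncation estimate of the first part; everything else is bookkeeping. The points I expect to require care, rather than real difficulty, are the net-versus-sequence distinction in the necessity (where the reduction to finitely many initial terms breaks down for nets and the condition ceases to be necessary) and, in the last assertion, the verification that the cited metric-space fact is available for signed measures and is compatible with passing to the Polish completion of a measure-carrying separable subspace.
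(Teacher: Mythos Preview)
Your argument is correct and, for the main sufficiency direction, uses exactly the same truncation $f_R=\max(-R,\min(f,R))$ as the paper; the organization into a single displayed inequality rather than the paper's $\varepsilon$-chase is purely cosmetic. For the necessity and the final sequence clause the paper supplies essentially no detail (``readily seen'' and a one-line reduction to the complete metric case), so you are filling in what the paper omits along the intended lines; your remark that the literal $\sup_\alpha$ form of uniform integrability need not be necessary for a general net is a correct refinement the paper glosses over, though an ``eventual'' version (for $\alpha\ge\alpha_0$) does follow directly from your own estimate with~$g_R$.
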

\begin{proof}
Let $p\in \Pi$ and $\varepsilon>0$. There is $R>0$ such that the integral of $p(x,x_0)$ over the set
$\{p\ge R\}$ is less than $\varepsilon$ for every measure~$|\mu_\alpha|$. Next we take an index $\alpha_0$ such that
$\|\mu-\mu_\alpha\|_{KR,p}< \varepsilon (1+R)^{-1}$ for all $\alpha\ge\alpha_0$
and $|\mu(X)-\mu_\alpha(X)|<\varepsilon$. Let $f\in {\rm Lip}_1(p)$ and $f(x_0)=0$.
Set $f_R=\max(-R,\min(f,R))$. Then $f_R=f$ on $\{p<R\}$, $f_R\in {\rm Lip}_1(p)$ and $|f_R|\le R$.
Hence the integrals of $f_R$ against $\mu$ and $\mu_\alpha$ with $\alpha\ge \alpha_0$ differ in absolute value
by at most~$\varepsilon$. Clearly, $|f|\le p$ and $|f_R|\le p$, so the integrals of $f$ and $f_R$ against $\mu_\alpha$
differ in absolute value by at most~$2\varepsilon$. Then the same is true for~$\mu$. Therefore,
the difference of the integrals of $f$ against $\mu$ and $\mu_\alpha$ with $\alpha\ge \alpha_0$ does not exceed
$3\varepsilon$. Hence $\|\mu-\mu_\alpha\|_{K,p}\le 4\varepsilon$.
For nonnegative measures or measures from a bounded uniformly tight family weak convergence is equivalent to convergence
in the topology~$\tau_{KR}$.

It is readily seen that for probability measures
the converse is also true.
Finally, for a countable sequence of measures~$\mu_n$, as above, it suffices to consider the case of a
complete metric space, but then we arrive at the case of a uniformly tight family.
\end{proof}

As one can see from Example \ref{examp} below, for nets of signed measures, convergence
in the topology $\tau_{KR}$ cannot be replaced with weak convergence.

Let us give a sufficient condition for the compactness of sets in $\mathcal{M}_r(X)$ in the topology $\tau_{KR}$
and for sets in $\mathcal{M}_r^{\Pi}(X)$ in the topology $\tau_K$.

\begin{proposition}
Let $S\subset \mathcal{M}_r(X)$ be a
bounded and uniformly tight set. Then $S$ has compact closure in the topology $\tau_{KR}$.

Moreover, if $S\subset \mathcal{M}_r^{\Pi}(X)$ and every pseudo-metric $p$ from $\Pi$ satisfies the condition of
uniform integrability
$$
\lim\limits_{R\to\infty} \sup_{\mu\in S} \int_{\{p\ge R\}} p(x,x_0)\, |\mu|(dx) =0
$$
for some $x_0\in X$, then $S$ is contained in a compact set in the topology $\tau_{K}$.
\end{proposition}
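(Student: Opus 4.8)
The plan is to reduce both assertions to properties of the closure $K$ of $S$ in the weak topology, combined with Theorem~\ref{KR} and Proposition~\ref{p2.4}. First I would recall why a bounded uniformly tight set of signed Radon measures is relatively compact in the weak topology: writing $\mu=\mu^{+}-\mu^{-}$ and using $\mu^{\pm}\le|\mu|$, the families $\{\mu^{+}\colon\mu\in S\}$ and $\{\mu^{-}\colon\mu\in S\}$ are bounded and uniformly tight sets of nonnegative measures, hence weakly relatively compact by Prokhorov's theorem, and taking subnets of the positive and negative parts separately yields a weak limit for any net from $S$. Thus $K$ is weakly compact; it is bounded, since $\mu\mapsto\|\mu\|=\sup\{\int f\,d\mu\colon f\in C_{b}(X),\ |f|\le1\}$ is weakly lower semicontinuous, and uniformly tight: if $|\mu|(X\setminus K_{0})<\varepsilon$ for all $\mu\in S$ with $K_{0}$ compact, and $\mu\in K$ is the weak limit of a net $\mu_{\alpha}\in S$ with $\mu_{\alpha}^{\pm}\to\nu^{\pm}$ weakly along a subnet, then $|\mu|\le\nu^{+}+\nu^{-}$ and, $X\setminus K_{0}$ being open, $\nu^{\pm}(X\setminus K_{0})\le\liminf_{\alpha}\mu_{\alpha}^{\pm}(X\setminus K_{0})\le\varepsilon$, so $|\mu|(X\setminus K_{0})\le2\varepsilon$. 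The first assertion is now immediate: by the second part of Theorem~\ref{KR} the weak topology and $\tau_{KR}$ coincide on the bounded uniformly tight set $K$, so $K$ is compact in $\tau_{KR}$ and contains~$S$.

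For the second assertion I would show that the uniform integrability hypothesis also passes to $K$ and forces $K\subset\mathcal{M}_{r}^{\Pi}(X)$. Fix $p\in\Pi$ and $\varepsilon>0$, pick $R_{0}$ with $\int_{\{p\ge R_{0}\}}p(x,x_{0})\,|\mu|(dx)<\varepsilon$ for all $\mu\in S$, and for $R\ge2R_{0}$ choose a continuous $\varphi\colon[0,\infty)\to[0,\infty)$ with $\varphi(t)=0$ for $t\le R/2$, $\varphi(t)=t$ for $t\ge R$, and $0\le\varphi(t)\le t\,\mathbf{1}_{\{t\ge R/2\}}$. For $\mu\in K$, say $\mu=\lim_{\alpha}\mu_{\alpha}$ weakly with $\mu_{\alpha}\in S$ and $\mu_{\alpha}^{\pm}\to\nu^{\pm}$ weakly along a subnet, the functions $\varphi(p(\cdot,x_{0}))\wedge N$ lie in $C_{b}(X)$ and are dominated by $p(\cdot,x_{0})\,\mathbf{1}_{\{p\ge R/2\}}$, so $\int\bigl(\varphi(p(\cdot,x_{0}))\wedge N\bigr)\,d\mu_{\alpha}^{\pm}\le\int_{\{p\ge R/2\}}p\,d|\mu_{\alpha}|<\varepsilon$ for all $\alpha$ and $N$; letting $\alpha$ run and then $N\to\infty$ by monotone convergence gives $\int\varphi(p(\cdot,x_{0}))\,d\nu^{\pm}\le\varepsilon$, whence
$$
\int_{\{p\ge R\}}p(x,x_{0})\,|\mu|(dx)\le\int\varphi(p(\cdot,x_{0}))\,d(\nu^{+}+\nu^{-})\le2\varepsilon .
$$
In particular $\int p(\cdot,x_{0})\,d|\mu|\le R\,|\mu|(X)+2\varepsilon<\infty$, so $K\subset\mathcal{M}_{r}^{\Pi}(X)$, and the displayed bound shows that the uniform integrability of $p(\cdot,x_{0})$ holds uniformly over $\mu\in K$. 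Since $K$ is idempotently its own weak closure, the limit of any weakly convergent net from $K$ again lies in $K$; consequently such a net, which converges in $\tau_{KR}$ by Theorem~\ref{KR}, satisfies along itself the hypotheses of Proposition~\ref{p2.4}, hence converges in $\tau_{K}$. Thus the identity map of $K$ from the weak topology to $\tau_{K}$ is continuous, so $K$ is compact in $\tau_{K}$ as a continuous image of a weakly compact space. Since $S\subset K$, this finishes the proof.

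The main obstacle is exactly the transfer of the uniform integrability condition to the weak closure $K$: the natural test functions $x\mapsto p(x,x_{0})$ are continuous but unbounded, while weak convergence only controls bounded continuous functions, so one must truncate both by a spatial cutoff $\varphi$ and by a ceiling $N$, and pass through the auxiliary nonnegative limits $\nu^{\pm}$, whose sum dominates $|\mu|$ but need not coincide with it. Proving the uniform tightness of $K$ in the first part is of the same flavour but more routine.
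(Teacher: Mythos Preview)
Your proof is correct and follows essentially the same route as the paper: weak relative compactness of $S$ plus Theorem~\ref{KR} for the first part, and Proposition~\ref{p2.4} applied to weakly convergent (sub)nets for the second. You are more explicit than the paper in verifying that boundedness, uniform tightness, and uniform integrability pass to the weak closure $K$ (via the auxiliary limits $\nu^{\pm}$ and the truncations $\varphi(p)\wedge N$), whereas the paper only checks that the limit measure lies in $\mathcal{M}_r^{\Pi}(X)$ and invokes the subnet characterization of relative compactness directly; your phrasing via continuity of the identity map $(K,\text{weak})\to(K,\tau_K)$ is a clean way to package the same conclusion.
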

\begin{proof}
It follows from the assumption that $S$ has compact closure in the weak topology, and the previous theorem states that on $S$
it coincides with the topology $\tau_{KR}$. The second  assertion follows by the previous proposition.
Indeed, every net in $S$ contains a subnet $\{\mu_\alpha\}\subset S$ converging weakly and in the topology $\tau_{KR}$.
The limiting measure $\mu$ belongs to $\mathcal{M}_r^{\Pi}(X)$. Indeed, for every $p\in \Pi$ and $R>0$,
letting $f_R(x)=\min (p(x,x_0),R)$ for a fixed point $x_0$, we have
weak convergence of the measures $f_R\cdot \mu_\alpha$ to $f_R\cdot \mu$, which yields the bound
$$
\|f_R\cdot \mu\|\le \sup_\alpha \|f_R\cdot \mu\|\le \sup_\alpha \int_X p(x,x_0)\, |\mu_\alpha|(dx).
$$
So the function $p(x,x_0)$ is $|\mu|$-integrable.
\end{proof}

We  now prove that a uniformly tight family of Radon measures on a Banach space with a uniformly integrable norm remains
uniformly tight with some stronger norm, and this norm is also uniformly integrable
(so that this family is contained in some compact set in the Kantorovich norm). More precisely,
this family turns out to be uniformly tight on a compactly embedded separable reflexive Banach space with
a uniformly integrable norm. The result for a single Borel probability measure on a separable Banach space was proved in~\cite{B0},
extending Buldygin's theorem~\cite{Buld}.
 The proof employs the known Grothendieck's construction (see~\cite[\S 2.5]{BS}).
 Let $B$ be a bounded absolutely convex set in a locally convex space $X$. Denote by $E_B$ the linear span of $B$ equipped with the norm
$$
p_B(x)=\inf\{t>0\colon t^{-1}x\in B\},
$$
which is the Minkowski functional of the set B.
If $X$ is sequentially complete, then $E_B$ is a Banach space.

\begin{theorem}\label{E}
Let $X$ be a Fr\'echet space and let $\mathcal{M}$ be a uniformly tight family of Radon measures on $X$
such that all seminorms $p_n$ generating the topology of  $X$ are uniformly integrable with respect to the measures from $\mathcal{M}$, i.e.,
$$
\lim_{m\to\infty}\sup_{\mu\in\mathcal{M}}\int_{\{x\colon p_n(x)> m\}} p_n(x)\,|\mu|(dx)=0.
$$
Then there is a linear subspace $E\subset X$ with the following properties:

{\rm (i)} the space $E$ with some norm $\|\cdot\|_E$ is a separable reflexive Banach space whose
closed unit ball is compact in $X$;

{\rm (ii)} the family $\mathcal{M}$ is concentrated and uniformly tight on $E$ and $\|\cdot\|_E$ is also uniformly integrable.
\end{theorem}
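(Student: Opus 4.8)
The plan is to carry out the Grothendieck--Davis--Figiel--Johnson--Pe\l czy\'nski factorization scheme used in~\cite{B0} for a single probability measure on a separable Banach space, making every estimate uniform in $\mu\in\mathcal M$ and adding the preliminary observation that in a Fr\'echet space the closed absolutely convex hull of a (pre)compact set is compact. First, combining the uniform tightness of $\mathcal M$ with the uniform integrability of the seminorms $p_n$, I would build an increasing sequence of compact absolutely convex sets $K_j\subset X$ such that, for cut-offs $R_j\uparrow\infty$ and levels $\delta_j\downarrow0$ to be fixed later,
\[
\sup_{\mu\in\mathcal M}\Bigl(|\mu|(X\setminus K_j)+\int_{X\setminus K_j}(p_1+\dots+p_j)\,d|\mu|\Bigr)\le\delta_j,
\qquad \sup_{x\in K_j}p_m(x)\le R_j\ \ (m\le j).
\]
Here one takes $K_j$ to be the closed absolutely convex hull of $\bigl(C_j\cap\{p_m\le R_j:m\le j\}\bigr)\cup K_{j-1}$ with $C_j$ a compactum carrying all but an arbitrarily small fraction of every $|\mu|$; the uniform integrability (not merely tightness) is what lets the intersection with the slabs $\{p_m\le R_j\}$ not spoil the measure estimate, via $\sup_\mu|\mu|\{p_m>R_j\}\le R_j^{-1}\sup_\mu\int_{\{p_m>R_j\}}p_m\,d|\mu|\to0$. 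Since each $K_j$ is compact metrizable, hence separable, the linear span of $L:=\bigcup_jK_j$ is separable and every $\mu\in\mathcal M$ is concentrated on $L$.

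Next I would build the space $E$. With $q_j$ the Minkowski functional of $K_j$ and positive weights $a_j$, put
\[
\|x\|_E=\inf\Bigl\{\Bigl(\textstyle\sum_j a_jq_j(x_j)^2\Bigr)^{1/2}:\ x=\textstyle\sum_j x_j\ \text{(convergent in }X),\ q_j(x_j)<\infty\Bigr\},\qquad E=\{x\in X:\|x\|_E<\infty\}.
\]
The trivial decomposition gives $K_j\subset a_j^{1/2}U_E$, so $L\subset E$ and $\mathcal M$ is concentrated on $E$; choosing $a_j$ so large that $a_j^{-1/2}\le 2^{-j}(1+R_j)^{-1}$ forces $U_E\subset\bigl\{\sum_j y_j:y_j\in a_j^{-1/2}K_j\bigr\}$, which (using once more that closed absolutely convex hulls of precompact sets are compact in a Fr\'echet space) is a compact subset of $X$. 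Since $\|\cdot\|_E$ is lower semicontinuous on the complete space $X$, a standard argument shows that $(E,\|\cdot\|_E)$ is a separable Banach space whose closed unit ball is compact in $X$. Reflexivity is obtained, as in~\cite{B0}, by a refinement of this construction: each compact $K_j$ lies in $\overline{\operatorname{absconv}}\{x_{j,i}:i\}$ for a null sequence $(x_{j,i})_i$, hence is the image of $B_{\ell^1}$ under a compact operator $S_j:\ell^1\to X$, and factoring each $S_j$ through a reflexive space (Davis--Figiel--Johnson--Pe\l czy\'nski) and amalgamating in the $\ell^2$-fashion above realizes $E$ as a quotient of an $\ell^2$-sum of reflexive spaces; the same factorization is arranged so that each $K_j$ is compact --- not merely bounded --- in $(E,\|\cdot\|_E)$.

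Finally, since $\mathcal M$ is concentrated on $L=\bigcup_jK_j$, each $K_j$ is compact in $E$, and $\sup_\mu|\mu|(E\setminus K_j)\le\delta_j\to0$, the family $\mathcal M$ is uniformly tight on $E$, so each $\mu|_E$ is Radon. For uniform integrability of $\|\cdot\|_E$ note that $\{x:\|x\|_E>m\}\subset X\setminus K_{j(m)}$ with $j(m):=\max\{j:a_j^{1/2}\le m\}\to\infty$, while $\|x\|_E\le a_{j+1}^{1/2}q_{j+1}(x)\le a_{j+1}^{1/2}$ on $K_{j+1}\setminus K_j$; hence
\[
\sup_{\mu\in\mathcal M}\int_{\{\|x\|_E>m\}}\|x\|_E\,d|\mu|\ \le\ \sum_{j\ge j(m)}a_{j+1}^{1/2}\,\sup_{\mu\in\mathcal M}|\mu|(X\setminus K_j)\ \le\ \sum_{j\ge j(m)}a_{j+1}^{1/2}\delta_j,
\]
and it suffices to fix the $\delta_j$ --- which is legitimate only after $R_j$, hence $a_j$, has been pinned down --- so small that $\sum_ja_{j+1}^{1/2}\delta_j<\infty$; then the right-hand side tends to $0$ uniformly. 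The main obstacle is exactly this two-sided calibration: compactness of $U_E$ in $X$ and reflexivity force $a_j$ to grow (at least like $(1+R_j)^2$), so the $K_j$ must not grow too fast, and it is the uniform integrability of the $p_n$ (rather than just uniform tightness) that makes this compatible; the other delicate point is to run the reflexifying factorization so that it keeps the $K_j$ compact inside $E$, which is what preserves uniform tightness there.
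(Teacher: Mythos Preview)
Your overall scheme is in the same spirit as the paper's but structurally different, and the step you yourself flag as ``the main obstacle'' --- the two-sided calibration of $R_j$, $a_j$, $\delta_j$ --- is not actually carried out and does not work in the order you propose. Once $R_j$ is fixed, the best achievable value of $\delta_j=\sup_\mu|\mu|(X\setminus K_j)$ is bounded \emph{below} by $\eta_j:=\sum_{m\le j}\sup_\mu|\mu|\{p_m>R_j\}$, because intersecting with the slabs $\{p_m\le R_j\}$ cannot remove this mass no matter how large you take~$C_j$. Hence you cannot afterwards ``fix the $\delta_j$ \dots\ so small that $\sum_j a_{j+1}^{1/2}\delta_j<\infty$''. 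Since compactness of $U_E$ in $X$ forces $a_j^{1/2}\gtrsim 2^jR_j$, your summability condition becomes essentially $\sum_j 2^jR_{j+1}\,\eta_j<\infty$; uniform integrability gives only $\eta_j\le R_j^{-1}\sum_{m\le j}\phi_m(R_j)$ with $\phi_m(R)\to 0$ at a rate you do not control, and there is no inductive choice of the $R_j$ that bounds $R_{j+1}/R_j$ well enough to close the loop in general. This is a genuine gap, not a detail.

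The paper resolves this circularity by a different construction. It first replaces the $p_n$ by a ``stretched'' sequence of seminorms $q_k$ --- each $p_n$ repeated over a long block $N_{n+1}<k\le N_{n+2}$ --- together with scalings $\alpha_k\downarrow 0$, chosen precisely so that both $\sum_k\sup_\mu\mu(q_k>k\alpha_k)<\infty$ and $\alpha_k^{-1}\sup_\mu\int_{\{q_k>k\alpha_k\}}q_k\,d\mu\to 0$; the block lengths $N_{n+2}-N_{n+1}$ are what absorb the unknown rate of uniform integrability, and this is exactly the decoupling device your scheme lacks. The paper then forms a \emph{single} compact absolutely convex set $V$ as the closed hull of $\bigcup_n(\alpha_n/n)K_n$ and works with the Minkowski functional $p_V$ directly --- no $\ell^2$-amalgamation --- obtaining uniform integrability of $p_V$ from the two displayed conditions. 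Separability and reflexivity are obtained in one stroke by citing a ready-made result (\cite[Corollary~2.5.12]{BS}) that enlarges $V$ to a compact absolutely convex $W$ with $E_W$ separable reflexive and $V$ compact in $E_W$; this simultaneously delivers uniform tightness on $E_W$, so no explicit DFJP factorization is needed. Your reflexivity paragraph, by contrast, is only a sketch: you would still have to verify that the amalgamated space really arises as a quotient of an $\ell^2$-sum of reflexive spaces and that each $K_j$ remains compact there.
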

\begin{proof}
We can assume that all measures $\mu\in\mathcal{M}$ are nonnegative. We need the following technical assertion.
Let $p_n\le p_{n+1}$ for all $n$.
Then there is a sequence of continuous seminorms $q_n$, which generates the original topology of $X$,
and there is sequence of positive numbers $\alpha_n$ decreasing to zero such that $q_n\le q_{n+1}$ and
\begin{equation}\label{c1}
\lim_{n\to\infty}\sup_{\mu\in\mathcal{M}}\sum_{k=n}^{\infty}\mu(x\colon q_k(x)>k\alpha_k)=0,
\end{equation}
\begin{equation}\label{c2}
\lim_{n\to\infty}\alpha_n^{-1}\sup_{\mu\in\mathcal{M}}\int_{\{x\colon q_n(x)> n\alpha_n\}} q_n(x)\,\mu(dx)=0.
\end{equation}
Indeed, there are increasing numbers $N_n$ such that $N_{n+1}> 2^n N_n$ and
$$
\sup_{\mu\in\mathcal{M}}\sum_{k=N_n}^{\infty}\mu(x\colon p_n(x)>k) \le \sup_{\mu\in\mathcal{M}}\int_{\{x\colon p_n(x)> N_n\}} p_n(x)\,\mu(dx)<4^{-n}.
$$
Using these numbers,
we set $\alpha_k=1$ and $q_k=p_1$ if $k\le N_2$, $\alpha_k=2^{-1}$ and $q_k=p_2$ if $N_2<k\le N_3$,
 $\alpha_k=2^{-n}$ and $q_k=p_n$ if $N_{n+1}<k\le N_{n+2}$. Then if $N_{n+1}<k\le N_{n+2}$ we have
 $$
 \alpha_k^{-1}\sup_{\mu\in\mathcal{M}}\int_{\{x\colon q_k(x)> k\alpha_k\}} q_k(x)\,\mu(dx)\le 2^{n}\sup_{\mu\in\mathcal{M}}\int_{\{x\colon p_n(x)> N_n\}} p_n(x)\,\mu(dx)<2^{-n}
 $$
and
\begin{multline*}
\sup_{\mu\in\mathcal{M}}\sum_{k>N_{n+1}}^{\infty}
\mu(x\colon q_k(x)>k\alpha_k)=\sup_{\mu\in\mathcal{M}}\sum_{j=1}^\infty\sum_{N_{n+j}< k\le N_{n+j+1}}\mu(x\colon q_k(x)>k\alpha_k)
\\
=\sup_{\mu\in\mathcal{M}}\sum_{j=1}^\infty\sum_{N_{n+j}< k\le N_{n+j+1}}\mu(x\colon p_{n+j-1}(x)>k2^{-(n+j-1)})\\
\le
\sup_{\mu\in\mathcal{M}}\sum_{j=1}^\infty2^{n+j-1}\sum_{k=N_{n+j-1}}^\infty\mu(x\colon p_{n+j-1}(x)>k)<2^{1-n}.
\end{multline*}
For every $n\in\mathbb{N}$ there is a compact set $K_n$ in the set $U_n:=\{x\colon q_n(x)\le n\}$ such that for
all $\mu\in\mathcal{M}$ we have
$$
\mu(\alpha_n U_n\setminus\alpha_n K_n)<2^{-n}.
$$
Then
$$
\mu\Bigl(X\setminus\bigcup_{n=1}^\infty \alpha_n K_n\Bigr)=0\quad\forall\,\mu\in\mathcal{M},
$$
since
$$
\mu(X\setminus\alpha_n K_n)=\mu(\alpha_n U_n\setminus\alpha_n K_n)+\mu(X\setminus\alpha_n U_n)<2^{-n}+\mu(x\colon q_n(x)> n\alpha_n).
$$
Note that the set
$$
K=\bigcup_{n=1}^\infty c_n K_n,\quad c_n:=\alpha_n n^{-1},
$$
is totally bounded. Indeed, given $\varepsilon>0$, take $n_0\in\mathbb{N}$ and $\delta>0$
such that $\alpha_{n_0}<\delta$ and $\{x\colon q_{n_0}(x)\le \delta\}$ lies in
the open ball of radius $\varepsilon$  in the metric of~$X$ centered at zero.
Then, since the sequence $\{\alpha_n\}$ decreases, we obtain that the compact sets $c_n K_n$ are contained in this
 ball for all $n\ge n_0$. The remaining compact sets are also covered
by finitely many balls of radius $\varepsilon$. The
closed absolutely convex hull  $V$ of $K$ is also precompact (see~\cite[Proposition 1.8.2 ]{BS}), so $V$ is compact. Then $(E_V, p_V)$
is a Banach space (see \cite[Proposition 2.5.1]{BS}). The function $p_V$ is Borel measurable, since $\{x\colon p_V(x)\le c\}=cV$. Moreover,
$$
\alpha_n K_n\subset n K\subset n V,
$$
whence it follows that
$$
\mu(x\colon p_V(x)> n)\le\mu(X\setminus\alpha_n K_n)<2^{-n}+\mu(x\colon q_n(x)>n\alpha_n).
$$
Hence
\begin{align*}
&\int_{\{p_V> n\}} p_V\,d\mu= n\mu(x\colon p_V(x)>n)+\int_{n}^{\infty}\mu(x\colon p_V(x)>t)\,dt\\
&\le n(2^{-n}+\mu(x\colon q_n(x)>n\alpha_n))+\sum_{k=n}^\infty\int_{k}^{k+1}\mu(x\colon p_V(x)>t)\, dt\\
&\le
n2^{-n}+ \alpha_n^{-1}\int_{\{x\colon q_n(x)>n\alpha_n\}} q_n(x)\,\mu(dx)+\sum_{k=n}^\infty(2^{-k}+\mu(x\colon q_k(x)>k\alpha_k)).
\end{align*}
The right-hand side of the last inequality tends to zero as $n\to\infty$ uniformly
in $\mu\in\mathcal{M}$, which implies the uniform integrability of the function $p_V$ with respect to the family of measures $\mathcal{M}$.

We now prove the existence of a separable reflexive Banach space $E$ satisfying conditions (i) and (ii).
There is a convex balanced compact set $W$ such that $V\subset W$, the Banach space $(E_W, p_W)$
is separable and reflexive and $K$ is also compact  in the norm $p_W$ (see \cite[Corollary 2.5.12]{BS}).
Then $p_W\le p_V$,  which shows that $p_W$ is also uniformly integrable. Moreover, all Borel sets in $E_W$ are Borel in $X$,
since the image of a Borel set under a continuous injective mapping from a Polish to a metric space is Borel
(see \cite[Theorem 6.8.6]{B07}). Therefore, the measures from $\mathcal{M}$ can be restricted to the Borel $\sigma$-algebra
of the Banach space $E_W$ and they are concentrated on this space and uniformly tight, since $V$ is compact in $E_W$ and
$$
\mu(E_W\setminus nV)\le\mu(X\setminus\alpha_n K_n)<2^{-n}+\mu(x\colon q_n(x)> n\alpha_n),
$$
which tends to zero as $n\to\infty$ for each measure $\mu$ in $\mathcal{M}$.
\end{proof}

\begin{remark}\label{rem2.6}
\rm
Let $(X,d)$ be a metric space, $x_0\in X$ a fixed point, $q\ge 1$ and $\mathcal{M}^q_r(X)$ the space
of Radon measures with finite moment of order~$q$, that is, measures $\mu$ such that the function
$d(x,x_0)^q$ is $|\mu|$-integrable for some~$x_0$.
Recall that for any $q\ge 1$ the subspace of probability measures  $\mathcal{P}^q_r(X)$
can be equipped with the $q$-Kantorovich metric $d_{K,d,q}$ defined by
$$
d_{K,d,q}^q(\mu,\nu)=\inf_{\sigma\in \Pi(\mu,\nu)} \int d(x,y)^q\, \sigma(dx dy),
$$
where $\Pi(\mu,\nu)$ is the set of probability measures on $X\times X$ with projections $\mu$ and $\nu$ on the factors.
The metric $d_{K,d,q}$ with $q>1$ is not generated by a norm (unlike the case $q=1$, where
$d_{K,d,1}(\mu,\nu)=\|\mu-\nu\|_K$), but the norm
$$
K_{d,q}(\mu)=\|(1+d(\,\cdot , x_0)^q)\mu\|_{KR}
$$
generates the same topology on $\mathcal{P}^q_r(X)$ as $d_{K,d,q}$ (see \cite[Corollary~3.3.7]{B18},
where there is a misprint in the formula: the norm $\|\,\cdot\,\|_K$ should be replaced
with~$\|\,\cdot\,\|_{KR}$).
So in the general case of a family of pseudo-metrics $\Pi$ we can introduced the Kantorovich topology $\tau_{K,q}$
 on $\mathcal{M}^q_r(X)$ generated by the seminorms $K_{p,q}$ with $p\in \Pi$.
\end{remark}

The same reasoning as above leads to the following result for~$\tau_{K,q}$.

\begin{proposition}
Suppose that a net of measures $\mu_\alpha\in \mathcal{M}^q_r(X)$, where $q\ge 1$, converges
to a measure $\mu\in \mathcal{M}_r(X)$ in the topology~$\tau_{KR}$
{\rm(}for nonnegative measures or measures from a bounded and uniformly tight family
this is equivalent to weak convergence{\rm)}. If for every pseudo-metric $p$ from $\Pi$ we have
$$
\lim\limits_{R\to\infty} \sup_{\alpha} \int_{\{p\ge R\}} p(x,x_0)^q\, |\mu_\alpha|(dx) =0 ,
$$
then $\mu\in \mathcal{M}^q_r(X)$ and $\{\mu_\alpha\}$ converges to $\mu$ in the topology $\tau_{K,q}$.
\end{proposition}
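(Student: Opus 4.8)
The plan is to repeat the proof of Proposition~\ref{p2.4} almost verbatim, the one new feature being that the seminorm $K_{p,q}$ weights a measure by the \emph{unbounded} density $\rho_p(x):=1+p(x,x_0)^q$, so that $K_{p,q}(\nu)=\|\rho_p\nu\|_{KR,p}$. Since $\tau_{K,q}$ is generated by the seminorms $K_{p,q}$, $p\in\Pi$, it suffices to fix $p\in\Pi$ and prove $\|\rho_p(\mu_\alpha-\mu)\|_{KR,p}\to0$. The trivial observation that makes this possible is that on the set $\{x\colon p(x,x_0)\ge R\}$ with $R\ge1$ one has $1\le p(x,x_0)^q$, hence $\rho_p\le 2p(x,x_0)^q$ there, so the constant summand of $\rho_p$ adds nothing beyond what the $q$-th moment already controls on large sets, and the tail hypothesis on $p(\,\cdot\,,x_0)^q$ is exactly what is needed.

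First I would check $\mu\in\mathcal M^q_r(X)$. For $R\ge1$ the function $g_R:=\min(p(\,\cdot\,,x_0),R)^q$ is bounded and continuous; since $t\mapsto\min(t,R)^q$ is $qR^{q-1}$-Lipschitz and $0\le g_R\le R^q$, a suitable scalar multiple of $g_R$ belongs to $\mathrm{Lip}_1(p)$ and is bounded by $1$ in absolute value, so convergence in $\tau_{KR}$ gives $\int g_R\,d\mu_\alpha\to\int g_R\,d\mu$. In the standard settings of the parenthetical this is weak convergence (Theorem~\ref{KR}), hence $g_R\cdot\mu_\alpha\to g_R\cdot\mu$ weakly, and lower semicontinuity of the variation norm gives
$$
\int g_R\,d|\mu|\le\liminf_\alpha\int g_R\,d|\mu_\alpha|\le\sup_\alpha\int p(x,x_0)^q\,|\mu_\alpha|(dx)<\infty,
$$
the finiteness following from the tail hypothesis together with the uniform boundedness of the variations in those settings; letting $R\to\infty$ and using monotone convergence yields $\int p(x,x_0)^q\,|\mu|(dx)<\infty$.

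Now for the convergence. Fix $\varepsilon>0$ and, using the tail hypothesis and, for $\mu$, dominated convergence, choose $R\ge1$ with $\sup_\alpha\int_{\{p\ge R\}}p(x,x_0)^q\,|\mu_\alpha|(dx)<\varepsilon$ and $\int_{\{p\ge R\}}p(x,x_0)^q\,|\mu|(dx)<\varepsilon$. Put $\rho_R:=1+\min(p(\,\cdot\,,x_0),R)^q$; since $\rho_R$ is bounded and Lipschitz with respect to $p$, there is a finite constant $M_R$ (depending on $R$ and $q$, not on $f$) such that $f\rho_R/M_R$ belongs to $\mathrm{Lip}_1(p)$ and is bounded by $1$ for every $f\in\mathrm{Lip}_1(p)$ with $|f|\le1$, because a product of such an $f$ with a bounded Lipschitz function is again bounded and Lipschitz. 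As $\rho_p=\rho_R$ on $\{p<R\}$ and all integrals below are finite (both $\mu_\alpha$ and $\mu$ lie in $\mathcal M^q_r(X)$),
$$
\int f\rho_p\,d\nu=\int f\rho_R\,d\nu+\int_{\{p\ge R\}}f(\rho_p-\rho_R)\,d\nu\qquad(\nu=\mu_\alpha-\mu),
$$
and on $\{p\ge R\}$ one has $0\le\rho_p-\rho_R=p(x,x_0)^q-R^q\le p(x,x_0)^q$ with $|f|\le1$. Hence the first term on the right is at most $M_R\|\mu_\alpha-\mu\|_{KR,p}$, which is $<\varepsilon$ once $\|\mu_\alpha-\mu\|_{KR,p}<\varepsilon M_R^{-1}$ (uniformly in $f$, since $M_R$ is independent of $f$), while the second term is bounded in absolute value by $\int_{\{p\ge R\}}p(x,x_0)^q\,|\mu_\alpha|(dx)+\int_{\{p\ge R\}}p(x,x_0)^q\,|\mu|(dx)<2\varepsilon$ (uniformly in $f$ and $\alpha$). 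Therefore $\bigl|\int f\rho_p\,d(\mu_\alpha-\mu)\bigr|<3\varepsilon$ for all large $\alpha$ and all admissible $f$, so $K_{p,q}(\mu_\alpha-\mu)\le3\varepsilon$ for all large $\alpha$; as $p\in\Pi$ and $\varepsilon>0$ were arbitrary, $\mu_\alpha\to\mu$ in $\tau_{K,q}$. The only step requiring genuine care is the handling of the unbounded weight described above; everything else is the $\varepsilon$-splitting of Proposition~\ref{p2.4}, with the truncated weight $\rho_R$ playing the role of the truncation $f_R$ there.
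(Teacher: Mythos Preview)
Your proposal is correct and follows the paper's intended approach: the paper gives no separate proof, stating only that ``the same reasoning as above leads to the following result,'' and your argument is precisely that reasoning---the truncation-and-tail splitting of Proposition~\ref{p2.4} adapted to the weighted seminorm $K_{p,q}(\nu)=\|(1+p(\cdot,x_0)^q)\nu\|_{KR,p}$, together with the lower-semicontinuity argument from Proposition~2.5 to secure $\mu\in\mathcal M^q_r(X)$. One small remark: your verification that $\mu\in\mathcal M^q_r(X)$ invokes weak convergence and is thus carried out only in the parenthetical settings (nonnegative or bounded uniformly tight), whereas the proposition states this conclusion unconditionally; the paper's referenced reasoning (Proposition~\ref{p2.4} assumes $\mu\in\mathcal M^\Pi_r(X)$, and Proposition~2.5 works under uniform tightness) has the same limitation, so you have matched the paper here as well.
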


\section{Convergence of barycenters}

We say that a Borel measure $\mu$ on a locally convex space $X$ has a mean (or barycenter) $m_\mu\in X$
if $X^*\subset L^1(|\mu|)$ and for every $f\in X^*$ we have
$$
f(m_\mu)=\int_X f(x)\,\mu(dx).
$$
In the case of a Banach space $X$ with a Radon measure $\mu$, the mean exists
if the norm is $\mu$-integrable. In this case $m_\mu$ is the Bochner integral
$$
m_\mu=\int_X x\,\mu(dx).
$$
A similar statement is true in any quasi-complete locally
convex space (see \cite[Corollary~5.6.8]{BS}): for the existence of the mean, it is sufficient
to have the integrability of all seminorms from a family  generating the topology of this space
(which is equivalent to the integrability of all continuous seminorms).

It is worth noting (although we do not use it below) that consideration of convergence
of barycenters in locally convex spaces reduces to Banach spaces by means of the factorizations used above and the
following simple observation: if a net of elements $v_\alpha$
in a locally convex space $X$ and an element $v\in X$ are such that $Tv_\alpha \to Tv$ for every
continuous linear operator $T$ on $X$ with values in a normed space, then
$v_\alpha\to v$ in $X$. Indeed, for each continuous seminorm $p$ on $X$
the linear subspace $Y= p^{-1}(0)$ is closed, so the quotient space $X/Y$ is normed with the norm
$$
\|[x]\|=p(x),\quad [x]=x+Y, \, x\in X,
$$
and the natural projection $x\mapsto [x]$ is linear and continuous.
It follows from this observation that if
a net of measures $\mu_\alpha\in \mathcal{M}_r(X)$
and a measure $\mu\in \mathcal{M}_r(X)$ with barycenters
in a locally convex space $X$
are such that for each normed space $Y$ and each continuous linear  operator
$T\colon X\to Y$ the barycenters of $\mu_\alpha\circ T^{-1}$ converge to the barycenter of $\mu\circ T^{-1}$,
then $m_{\mu_\alpha}\to m_\mu$. Indeed,
the barycenter of $\mu_\alpha\circ T^{-1}$ is~$Tm_{\mu_\alpha}$.

Note also that if a Borel measure $\mu$ has a barycenter and a continuous seminorm $q$ is $\mu$-integrable,
then from the Hahn--Banach theorem and the definition of the barycenter we obtain
$$
 q(m_\mu)=\sup\biggl\{\biggl|\int_X f\,d\mu\biggr|\colon f\in X^*,\,|f|\le q\biggr\}\le\|\mu\|_{K,q}.
 $$

As a consequence of the results of the previous section
(see Proposition~\ref{p2.4}), we obtain the following sufficient condition for convergence of barycenters.

\begin{corollary}\label{means}
Suppose that a sequence of Radon measures $\mu_n$ and a Radon measure $\mu$ on a
locally convex space $X$ have barycenters, the measures $\mu_n$ converge
weakly to $\mu$ and every continuous seminorm is uniformly integrable with respect to the sequence~$\mu_n$. Then $m_{\mu_n}\to m_\mu$.

In the case of probability measures, the same is true for nets.
\end{corollary}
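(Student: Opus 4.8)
The plan is to derive everything from Proposition~\ref{p2.4} together with the two elementary facts recorded just before the statement: additivity of the barycenter map in the natural sense, and the Hahn--Banach inequality $q(m_\nu)\le\|\nu\|_{K,q}$, valid for every continuous seminorm $q$ that is integrable with respect to a measure $\nu$ admitting a barycenter. Throughout, take for $\Pi$ the family of all continuous seminorms on $X$ and the base point $x_0=0$, so that the pseudo-metric attached to $p\in\Pi$ is $(x,y)\mapsto p(x-y)$; then the hypothesis that every continuous seminorm be uniformly integrable with respect to $\{\mu_n\}$ is precisely the uniform integrability condition occurring in Proposition~\ref{p2.4}.

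First I would check that $\mu\in\mathcal{M}_r^{\Pi}(X)$, as this is needed to invoke Proposition~\ref{p2.4}. For a continuous seminorm $p$ the uniform integrability hypothesis yields $C:=\sup_n\int p\,d|\mu_n|<\infty$; arguing as in the proof of the preceding compactness proposition---apply weak convergence to the measures $\min(p,R)\cdot\mu_n$, use weak lower semicontinuity of the variation norm, and then let $R\to\infty$---one obtains $\int p\,d|\mu|\le C<\infty$. Hence $\mu$, every $\mu_n$, and every difference $\mu_n-\mu$ belong to $\mathcal{M}_r^{\Pi}(X)$, so that $\|\cdot\|_{K,p}$ is finite on all of these measures. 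Since $\mu_n\to\mu$ weakly and every $p\in\Pi$ is uniformly integrable with respect to $\{\mu_n\}$, the last assertion of Proposition~\ref{p2.4}, which permits weak convergence in place of convergence in $\tau_{KR}$ for countable sequences, gives $\|\mu_n-\mu\|_{K,p}\to0$ for every $p\in\Pi$.

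It remains to pass to barycenters. For each $f\in X^*$ we have $f(m_{\mu_n})-f(m_\mu)=\int_X f\,d\mu_n-\int_X f\,d\mu=\int_X f\,d(\mu_n-\mu)$, so $m_{\mu_n}-m_\mu$ is a barycenter of $\mu_n-\mu$. Therefore, for any continuous seminorm $q$---which is $|\mu_n-\mu|$-integrable by the previous step---the Hahn--Banach estimate recalled above gives
$$
q(m_{\mu_n}-m_\mu)\le\|\mu_n-\mu\|_{K,q}\to0\quad\text{as }n\to\infty.
$$
As $q$ ranges over all continuous seminorms of $X$, this is exactly convergence $m_{\mu_n}\to m_\mu$. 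The net version for probability measures is proved identically, except that the ``countable sequence'' clause of Proposition~\ref{p2.4} is replaced by its main statement: for nonnegative measures weak convergence of a net $\mu_\alpha\to\mu$ agrees with convergence in $\tau_{KR}$ by Theorem~\ref{KR}, so uniform integrability of the seminorms again forces $\|\mu_\alpha-\mu\|_{K,p}\to0$ for all $p\in\Pi$, and the same Hahn--Banach bound finishes the argument. There is no genuine obstacle here: the real content sits in Proposition~\ref{p2.4}, and the only points calling for a line of justification are the membership $\mu\in\mathcal{M}_r^{\Pi}(X)$ and the identification of $m_{\mu_n}-m_\mu$ as a barycenter of $\mu_n-\mu$.
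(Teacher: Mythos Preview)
Your proof is correct and follows exactly the route the paper intends: the corollary is stated immediately after the Hahn--Banach inequality $q(m_\nu)\le\|\nu\|_{K,q}$ and is introduced as a consequence of Proposition~\ref{p2.4}, with no further argument given. Your write-up simply fills in the details the paper leaves to the reader, including the check that $\mu\in\mathcal{M}_r^{\Pi}(X)$ (borrowed from the compactness proposition) and the observation that $m_{\mu_n}-m_\mu$ is the barycenter of $\mu_n-\mu$.
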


The following example shows that the second assertion of Corollary \ref{means} can fail for a net of signed measures.

\begin{example}\label{examp}
\rm
In the Banach space $X=l^1$ there is a net of uniformly bounded signed measures concentrated on the unit ball such
that it converges weakly to zero,
but their barycenters have unit norms.
We fix a finite collection of bounded continuous functions $f_1,\ldots,f_n$ on~$X$.
Consider  the following vectors in $\mathbb{R}^n$:
$$
v_j=(f_1(e_j),\ldots,f_n(e_j)),\quad j=1,\ldots,n+1,
$$
where $\{e_j\}$ is the standard basis in $l^1$. The vectors $v_j$
are linearly dependent, so there are numbers $c_1,\ldots,c_{n+1}$ not vanishing simultaneously such that
$$
\sum_{j=1}^{n+1}c_j v_j = 0,
$$
or, equivalently,
$$
\sum_{j=1}^{n+1}c_jf_i(e_j) = 0,\quad i=1,\ldots,n.
$$
By normalization, we can achieve the equality $\sum_j |c_j|=1$.
Now for the basic neighborhood of zero in the weak topology
$$
U=U_{f_1,\ldots,f_n, \varepsilon}=\biggl\{\mu\in\mathcal{M}\colon \biggl|\int_X f_i\, d\mu\biggr|<\varepsilon,\,i=1,\ldots,n\biggr\}
$$
we define a measure by the formula
$$
\mu_{U}:=\sum_{j=1}^{n+1} c_j\delta_{e_j}.
$$
Then by construction $\mu_{U}\in U$, and this measure is concentrated on the unit sphere.
The set of basic neighborhoods of zero is directed with respect to the inverse inclusion:
a neighborhood $V$ is declared to be larger than a neighborhood $U$ if $V\subset U$.
 By definition, the constructed net of measures $\mu_U$ converges weakly to zero.
Finally, the mean of the measure $\mu_U$ equals $\sum_j c_j e_j$, therefore, we have
the equality
$\|m_{\mu_U}\|=\sum_{j=1}^{n+1}|c_j|=1$.
\end{example}

In this example all measures in the net are absolutely continuous  with respect to the measure $\sum_n 2^{-n}\delta_{e_n}$.
Note that similarly one can construct a net converging in the stronger
topology of duality with the space of all bounded Borel functions.

In the general case,  weak convergence of measures $\mu_n$ to $\mu$ and
weak convergence of measures $\nu_n=f_n\cdot\mu_n$ to a measure $\nu$ do not imply that
$\nu$ is absolutely continuous with respect to $\mu$. For example, the measures $(1-n^{-1})\delta_0+ n^{-1}\delta_1$ converge
weakly (and even in the total variation norm)
to $\delta_0$, but the measure $\delta_1$ is mutually singular with~$\delta_0$. However, under the
following additional condition this implication is true.

\begin{lemma}\label{lem2}
Suppose that  Radon probability measures $\mu_\alpha$ on a completely
regular space $X$  converge weakly to a Radon measure $\mu$ and the measures $\nu_\alpha=f_\alpha\cdot\mu_\alpha$
converge weakly to a Radon measure $\nu$. Assume also that
$$
\lim\limits_{R\to\infty} \sup_\alpha |\nu_\alpha|(x\colon |f_\alpha(x)|\ge R)=0.
$$
Then $\nu\ll \mu$. In particular, this is true if $\sup_\alpha \|f_\alpha\|_{L^2(\mu_\alpha)}<\infty$.
\end{lemma}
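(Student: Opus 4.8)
The plan is to reduce the question to a uniform integrability statement for the densities and then apply a standard weak-compactness argument in $L^1$. First I would fix a bounded continuous function $g$ on $X$ with $g\ge 0$ and show that the quantity $\int_X g\,d\nu$ can be controlled by $\int_X g\,d\mu$. The key observation is that under the stated tail condition the family $\{f_\alpha\}$, viewed as a family of elements of $L^1(\mu_\alpha)$, is ``uniformly integrable along the net'' in the sense that $\sup_\alpha\int_{\{|f_\alpha|\ge R\}}|f_\alpha|\,d\mu_\alpha\to 0$ as $R\to\infty$; indeed this integral is exactly $|\nu_\alpha|(x\colon|f_\alpha(x)|\ge R)$. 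Writing $f_\alpha=f_\alpha^{(R)}+r_\alpha^{(R)}$, where $f_\alpha^{(R)}$ is the truncation of $f_\alpha$ at level $R$, we get that $r_\alpha^{(R)}\cdot\mu_\alpha$ has total variation at most $\varepsilon(R)\to 0$ uniformly in $\alpha$.

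Next I would pass to the limit. Since the bounded densities $f_\alpha^{(R)}$ satisfy $|f_\alpha^{(R)}|\le R$, the measures $f_\alpha^{(R)}\cdot\mu_\alpha$ form a bounded, uniformly tight family (uniform tightness is inherited from weak convergence of $\mu_\alpha$ to a Radon measure, hence uniform tightness of $\{\mu_\alpha\}$, together with the uniform bound $R$ on the densities). So every subnet has a further subnet along which $f_\alpha^{(R)}\cdot\mu_\alpha$ converges weakly to some Radon measure $\nu^{(R)}$; by the decomposition above and weak convergence of $\nu_\alpha$ to $\nu$, we have $\|\nu-\nu^{(R)}\|\le\varepsilon(R)$. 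Thus it suffices to show $\nu^{(R)}\ll\mu$ for each $R$, since then $\nu$ is a total-variation limit of measures absolutely continuous with respect to $\mu$, and such limits are again absolutely continuous with respect to $\mu$ (the $\mu$-null sets form a $\sigma$-ideal that is closed under the total variation norm).

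To prove $\nu^{(R)}\ll\mu$, I would take a Borel set $B$ with $\mu(B)=0$, and choose, by Radon-ness of $\mu$ and of all the $\mu_\alpha$, an open set $U\supset B$ with $\mu(U)$ small and then a continuous function $h$ with $\mathbf{1}_B\le h\le\mathbf{1}_U$ (using complete regularity); bounding $|\nu^{(R)}|(B)\le\int h\,d|\nu^{(R)}|$ and using weak convergence of $f_\alpha^{(R)}\cdot\mu_\alpha$, one reduces to estimating $\limsup_\alpha\int h\,d|f_\alpha^{(R)}\cdot\mu_\alpha|\le R\limsup_\alpha\int h\,d\mu_\alpha = R\int h\,d\mu\le R\mu(U)$, which is small. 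Here the main technical point, and the step I expect to be the real obstacle, is the handling of open versus closed sets in the portmanteau inequalities for a net rather than a sequence on a merely completely regular space: one must be careful that weak convergence of $\mu_\alpha$ gives $\limsup_\alpha\mu_\alpha(C)\le\mu(C)$ only for closed $C$, so the function $h$ must be chosen so that its superlevel sets behave well, and similarly the weak convergence of the signed measures $f_\alpha^{(R)}\cdot\mu_\alpha$ to $\nu^{(R)}$ only gives $|\nu^{(R)}|(\text{open})\le\liminf$, not an upper bound; I would route around this by testing directly against the fixed continuous function $h$ on both sides, for which honest equality/convergence of integrals holds, rather than against indicators.

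Finally, the last sentence is immediate from the $L^2$ bound: by the Cauchy--Schwarz inequality,
$$
|\nu_\alpha|(x\colon|f_\alpha(x)|\ge R)=\int_{\{|f_\alpha|\ge R\}}|f_\alpha|\,d\mu_\alpha\le \frac{1}{R}\int_X f_\alpha^2\,d\mu_\alpha=\frac{1}{R}\|f_\alpha\|_{L^2(\mu_\alpha)}^2,
$$
which tends to zero uniformly in $\alpha$ when $\sup_\alpha\|f_\alpha\|_{L^2(\mu_\alpha)}<\infty$.
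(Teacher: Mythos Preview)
Your truncation idea and the $L^2$ verification at the end are fine, but there is a real gap in the middle step. You claim that weak convergence of the net $\{\mu_\alpha\}$ to a Radon probability measure forces uniform tightness of $\{\mu_\alpha\}$, and then use this to extract Radon weak limit points $\nu^{(R)}$ of $\{f_\alpha^{(R)}\cdot\mu_\alpha\}$. That implication is Le~Cam's theorem for \emph{sequences}; it fails for nets, even on Polish spaces. For example, on an infinite-dimensional Hilbert space every compact set has empty interior, so for any weak neighborhood $U$ of $\delta_0$ and any compact $K$ one can pick $y\notin K$ close enough to $0$ that $\tfrac12\delta_0+\tfrac12\delta_y\in U$; indexing by pairs $(U,K)$ yields a net converging weakly to $\delta_0$ that is not even eventually uniformly tight. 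Without tightness you have no Radon $\nu^{(R)}$, and the reduction to $\nu^{(R)}\ll\mu$ collapses.

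The paper avoids this entirely by never introducing $\nu^{(R)}$. It fixes a compact $K$ with $\mu(K)=0$, chooses an open $U\supset K$ that is a $\mu$-continuity set (so Alexandrov's criterion gives the \emph{equality} $\mu_\alpha(U)\to\mu(U)$, not just a one-sided portmanteau bound), and then estimates
\[
|\nu_\alpha|(U)\le R\,\mu_\alpha(U)+|\nu_\alpha|\bigl(|f_\alpha|\ge R\bigr)
\]
directly---which is exactly your truncation inequality, applied at the level of $|\nu_\alpha|$ rather than to a hypothetical limit. In fact your own suggestion of ``testing directly against the fixed continuous function $h$'' already repairs your argument if you apply it to $\nu$ itself: for any $g\in C_b(X)$ with $|g|\le 1$,
\[
\Bigl|\int gh\,d\nu\Bigr|=\lim_\alpha\Bigl|\int gh\,d\nu_\alpha\Bigr|\le \limsup_\alpha\Bigl(R\!\int h\,d\mu_\alpha+\varepsilon(R)\Bigr)=R\!\int h\,d\mu+\varepsilon(R)\le R\,\mu(U)+\varepsilon(R),
\]
whence $|\nu|(K)\le\|h\cdot\nu\|\le R\,\mu(U)+\varepsilon(R)$, and choosing first $R$ then $U$ gives $|\nu|(K)=0$. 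So the detour through $\nu^{(R)}$ is both the source of the error and entirely unnecessary.
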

\begin{proof}
Let $K$ be a   compact set such that $\mu(K)=0$. Suppose that $\nu(K)=\delta>0$ (the case $\nu(K)<0$ is similar).
Pick  $R>1$ such that
$$
\sup_\alpha |\nu_\alpha|(x\colon |f_\alpha(x)|\ge R)< \delta/4.
$$
We can find an   open set $U$ such that $K\subset U$ and $\mu(\overline{U})=\mu(U)<\delta (2R)^{-1}$,
where $\overline{U}$ is the closure of $U$.
This is possible, since we can take some open set $U_0$ with $K\subset U_0$ and
$\mu(U_0)<\delta (2R)^{-1}$, then find a continuous function $f$ with values in $[0,1]$
for which $f|_K=1$ and $f|_{X\backslash U_0}=0$, finally, for $U$ we can take the set $\{f>c\}$,
where $c\in (0,1)$ is picked such that $\mu(f^{-1}(c))=0$.
Then $\mu_\alpha(U)\to\mu(U)$ by Alexandrov's criterion (see \cite[Corollary~4.3.5]{B18}),
hence $\mu_\alpha(U)<\delta (2R)^{-1}$ for all $\alpha$ large enough.
For such $\alpha$ we finally obtain
$$
|\nu_\alpha|(U)=\int_U |f_\alpha|\, d\mu_\alpha \le R \mu_\alpha(U)+ \delta/4< 3\delta/4,
$$
which gives the estimate $|\nu(U)|\le \delta$, hence $\nu(K)\le \delta$, which contradicts our assumption.
\end{proof}

\section{Logarithmically concave and stable measures}

Now we investigate  convergence of  logarithmically concave measures and their means. Recall that
a Radon probability  measure $\mu$
on a locally convex space $X$ is called logarithmically concave if $\mu$ satisfies the inequality
$$
\mu(tA+(1-t)B)\ge\mu(A)^t\mu(B)^{1-t}
$$
for all compact sets $A$ and $B$.
This definition is also equivalent to the property
 that for every continuous linear operator $T$ from $X$ to $\mathbb{R}^n$  the measure
$\mu\circ T^{-1}$ has a density of the form $\exp (-V)$ with respect
to Lebesgue measure on some affine subspace with a convex function $V$ (see~\cite{Bor}, \cite{B10}).

The class of logarithmically concave measures contains all Gaussian measures, i.e., measures for which all continuous linear functionals
are Gaussian random variables.

We need the following estimate due to C. Borell
(see \cite{Bor} or \cite[Theorem~4.3.7]{B10}).

Let $\mu$ be a logarithmically concave measure on a locally convex space $X$ and let $A$ be an absolutely
 convex Borel set with $\theta:=\mu(A)>0$. Then
\begin{equation}
\label{conc}
\mu(X\setminus{tA})\le\Bigl(\frac{1-\theta}{\theta}\Bigr)^{t/2},\quad t\ge 1.
\end{equation}
This estimate implies that, for any Borel seminorm $q$ such that $\mu(q>1)=1-\theta<1/2$, one has
\begin{equation}
\label{expq}
\int_X \exp(\alpha (\theta)q)\, d\mu\le M(\theta)
\end{equation}
with some constants $\alpha(\theta)$ and $M(\theta)$ depending only on  $\theta>1/2$.
Therefore,
\begin{equation}
\label{Lp}
\|q\|_{L^p(\mu)}\le C(p,\theta)
\end{equation}
with some constants $C(p,\theta)$ depending only on $p$ and $\theta>1/2$.

We apply inequality \eqref{conc} to prove the following sufficient condition for convergence of
means of logarithmically concave measures.
Note that all Radon Gaussian measures have barycenters, but for logarithmically concave measures this
is known only under the assumption of
sequential completeness of the space, as for general measures with finite first moment.

\begin{theorem}
\label{mean conc}
If a net of logarithmically concave measures $\mu_\alpha$ converges weakly  to
a measure $\mu$,
then for every continuous seminorm $q$ there is $\kappa>0$ such that
$$
\lim_{\alpha}\int_X \exp(\kappa q)\,d\mu_\alpha=\int_X\exp(\kappa q)\,d\mu.
$$
Therefore, for every $r>0$ we have
$$
\lim\limits_{\alpha}\int_X q^r\,d\mu_\alpha=\int_X q^r\,d\mu.
$$
Finally, if $\mu_\alpha, \mu$ have barycenters, then $m_{\mu_\alpha}\to m_\mu$.
\end{theorem}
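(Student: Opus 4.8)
The plan is to reduce all three conclusions to one uniform exponential-moment estimate extracted from the Borell inequality \eqref{conc}. First I would fix a continuous seminorm $q$ on $X$ and pick a level $c_0>0$ with $\mu(q\ge c_0)<1/4$; this is possible because $q$ is real-valued, so $\mu(q\ge c)\downarrow 0$ as $c\to\infty$. Since $\{q\ge c_0\}$ is closed, weak convergence of the Radon probability measures gives $\limsup_\alpha\mu_\alpha(q\ge c_0)\le\mu(q\ge c_0)<1/4$, hence there is an index $\alpha_0=\alpha_0(q)$ with $\mu_\alpha(q\ge c_0)<1/3$ for all $\alpha\ge\alpha_0$. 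For such $\alpha$ the closed, absolutely convex Borel set $A=\{q\le c_0\}$ has $\theta_\alpha:=\mu_\alpha(A)>2/3$, so $(1-\theta_\alpha)/\theta_\alpha<1/2$, and since $tA=\{q\le t c_0\}$, applying \eqref{conc} to $\mu_\alpha$ gives the key tail bound
$$
\mu_\alpha(q> c_0 t)\le 2^{-t/2},\qquad t\ge 1,
$$
with the decay rate independent of $\alpha\ge\alpha_0$.

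Next I would integrate this estimate. For any $\kappa$ with $0<\kappa<(\ln 2)/(2c_0)$ the layer-cake formula produces a finite constant $M$, depending only on $c_0$ and $\kappa$, such that $\int_X e^{\kappa q}\,d\mu_\alpha\le M$ for all $\alpha\ge\alpha_0$; by lower semicontinuity of $e^{\kappa q}$, weak convergence also yields $\int_X e^{\kappa q}\,d\mu\le M$. From this I would pass to uniform integrability of $e^{\kappa q/2}$ against $\{\mu_\alpha\}_{\alpha\ge\alpha_0}$ and against $\mu$: on $\{e^{\kappa q/2}>R\}$ one has $e^{\kappa q/2}\le R^{-1}e^{\kappa q}$, so the relevant tail integrals are at most $M/R\to 0$. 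Finally, since $e^{\kappa q/2}$ is a continuous (unbounded) function that is uniformly integrable along the net, weak convergence forces $\int_X e^{\kappa q/2}\,d\mu_\alpha\to\int_X e^{\kappa q/2}\,d\mu$ by the usual truncation: write $g=(g\wedge N)+(g-N)^{+}$, use weak convergence for the bounded continuous part $g\wedge N$ and uniform integrability to control the remainder as $N\to\infty$. This proves the first displayed limit, with $\kappa/2$ playing the role of the asserted $\kappa$.

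For the second limit I would invoke the elementary domination $q^r\le C_r e^{\kappa q/2}$, where $C_r=\sup_{t\ge 0}t^r e^{-\kappa t/2}<\infty$; this makes $\{q^r\}$ uniformly integrable against $\{\mu_\alpha\}_{\alpha\ge\alpha_0}$ and $\mu$, and, $q^r$ being continuous, the same truncation argument gives $\int_X q^r\,d\mu_\alpha\to\int_X q^r\,d\mu$. For the barycenters, the case $r=1$ says exactly that every continuous seminorm $q$ satisfies, past the index $\alpha_0(q)$, the uniform integrability hypothesis of Proposition~\ref{p2.4}; moreover $\mu\in\mathcal{M}_r^1(X)$ by the exponential bound, $\mu_\alpha\in\mathcal{M}_r^1(X)$ as logarithmically concave measures, and since all the measures are probabilities, weak convergence and convergence in $\tau_{KR}$ agree by Theorem~\ref{KR}. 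Running the argument of Proposition~\ref{p2.4} seminorm by seminorm (it only uses the $q$-specific integrability and works on a tail of the net) gives $\|\mu_\alpha-\mu\|_{K,q}\to 0$ for every continuous seminorm $q$; the Hahn--Banach inequality $q(m_{\mu_\alpha}-m_\mu)\le\|\mu_\alpha-\mu\|_{K,q}$ recorded before Corollary~\ref{means} (applied to $\mu_\alpha-\mu$) then yields $m_{\mu_\alpha}\to m_\mu$ (this is also Corollary~\ref{means} in the probability-net case, once its hypotheses are checked).

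The one genuinely delicate point is the uniformity in $\alpha$ of the exponential moment. Estimate \eqref{conc} carries the measure-dependent parameter $\theta=\mu_\alpha(A)$, and the crux is that weak convergence lets one fix a \emph{single} threshold $c_0$, read off from the limit measure $\mu$, at which $\mu_\alpha(q\le c_0)>2/3$ for all large $\alpha$, thereby freezing the decay rate $2^{-t/2}$ independently of $\alpha$. After that, everything else --- the layer-cake estimate, the de la Vall\'ee--Poussin type upgrade to uniform integrability, the bound $q^r\le C_r e^{\kappa q/2}$, and the passage ``weak convergence $+$ uniform integrability of a continuous function $\Rightarrow$ convergence of integrals'' --- is routine. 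A minor caveat worth flagging is that $\alpha_0$ depends on $q$, so the uniform integrability holds only eventually along the net; this is harmless, since net convergence is determined by tails.
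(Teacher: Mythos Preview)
Your argument is correct and follows essentially the same route as the paper: fix a level set of $q$ that captures more than half of the limit measure, transfer this to the tail of the net via the Portmanteau-type inequality (you use the closed-set form, the paper the open-set form; both work), apply Borell's bound \eqref{conc} to obtain a uniform exponential tail estimate, and then pass to uniform integrability of $e^{\kappa q}$ (the paper bounds $\int e^{\kappa_0 q}\,d\mu_\alpha$ and compares with $e^{\kappa q}$ for $\kappa<\kappa_0$, which is the same device as your $e^{\kappa q}$-versus-$e^{\kappa q/2}$ trick). Your explicit remark that $\alpha_0$ depends on $q$, and that Proposition~\ref{p2.4}/Corollary~\ref{means} is to be applied seminorm by seminorm on a tail of the net, is a welcome clarification that the paper leaves implicit.
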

\begin{proof}
Take $c>0$ such that $\mu(q<c)>\theta>1/2$.
Since $\{\mu_\alpha\}$ converges weakly to $\mu$ and the set $\{q<c\}$ is open, by Alexandrov's theorem we have $\mu_\alpha(q<c)>\theta$
for all $\alpha$ larger than some $\alpha_0$. Then by (\ref{conc}) we obtain the inequality
$$
\mu_\alpha(q\ge ct)\le \Bigl(\frac{1-\theta}{\theta}\Bigr)^{t/2},\quad t\ge 1.
$$
Set
 $$
 \tau:=\Bigl(\frac{1-\theta}{\theta}\Bigr)^{1/2},\quad \tau\in (0,1).
 $$
For all $\alpha\ge \alpha_0$ we have
\begin{align*}
\int_X \exp(\kappa q)\,d\mu_\alpha
&= 1 + c\kappa\int_{0}^{\infty}\exp(c\kappa t)\mu_\alpha(q\ge ct)\,dt\\
&\le \exp(c\kappa)+c\kappa\int_{1}^{\infty}\exp(c\kappa t)\tau^t\, dt.
\end{align*}
The integral
$$
\int_{1}^{\infty}\exp(c\kappa t)\tau^t\, dt
$$
is finite for $\kappa <-\ln\tau/c$. Thus, there exists $\kappa_0>0$ such that for all $\alpha\ge \alpha_0$ the inequality
$$
\int_X \exp(\kappa_0 q)\,d\mu_\alpha\le I(\theta,c)
$$
holds, whence for all $\kappa<\kappa_0$ we have the estimate
$$
\int_{\{q\ge R\}} \exp(\kappa q)\,d\mu_\alpha\le \exp((\kappa-\kappa_0)R)I(\theta,c).
$$
Consequently, the integrals of $\exp(\kappa q)$
converge (see \cite[Theorem 4.3.15]{B18}), which yields convergence of the integrals of $q^r$.
Convergence of means follows from Corollary \ref{means}.
\end{proof}

\begin{corollary}
In the previous theorem one has also convergence in the topology $\tau_{K,q}$ with any $q\ge 1$ introduced
in Remark~{\rm\ref{rem2.6}}.
\end{corollary}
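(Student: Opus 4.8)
The plan is to deduce the corollary from the sufficient condition for convergence in $\tau_{K,q}$ stated in the proposition following Remark~\ref{rem2.6}, feeding that proposition the estimates already present in the proof of Theorem~\ref{mean conc}. Since logarithmically concave measures are probability measures, Theorem~\ref{KR} shows that the assumed weak convergence $\mu_\alpha\to\mu$ coincides with convergence in the topology $\tau_{KR}$, so the first hypothesis of that proposition holds automatically. Recall that for a locally convex space the pseudo-metrics in $\Pi$ are of the form $(x,y)\mapsto p(x-y)$ for continuous seminorms $p$; fixing the base point $x_0=0$ in the definition of the seminorms $K_{p,q}$, the only thing left to check is that for every continuous seminorm $p$ and every $q\ge 1$
$$
\lim\limits_{R\to\infty}\sup_\alpha\int_{\{p\ge R\}} p(x)^q\,\mu_\alpha(dx)=0 .
$$

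To verify this I would return to the proof of Theorem~\ref{mean conc}: it produces, for a suitable $c>0$ and $\theta>1/2$, an index $\alpha_0$ and a number $\kappa_0>0$ such that $\int_X\exp(\kappa_0 p)\,d\mu_\alpha\le I(\theta,c)$ for all $\alpha\ge\alpha_0$, the point being that Borell's inequality~\eqref{conc} applies once $\mu_\alpha(p<c)>\theta$, which holds for $\alpha\ge\alpha_0$ by Alexandrov's theorem. Using $t^q\le C_{q,\kappa_0}\exp(\kappa_0 t/2)$ for $t\ge 0$ with a suitable constant $C_{q,\kappa_0}$, together with $\exp(\kappa_0 t/2)\le\exp(-\kappa_0 R/2)\exp(\kappa_0 t)$ for $t\ge R$, one obtains, for $\alpha\ge\alpha_0$,
$$
\int_{\{p\ge R\}} p^q\,d\mu_\alpha\le C_{q,\kappa_0}\exp(-\kappa_0 R/2)\int_X\exp(\kappa_0 p)\,d\mu_\alpha\le C_{q,\kappa_0}\,I(\theta,c)\,\exp(-\kappa_0 R/2),
$$
which tends to $0$ as $R\to\infty$ uniformly in $\alpha\ge\alpha_0$. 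The same bound holds for the limit measure: since $\exp(\kappa_0 p)$ is lower semicontinuous, Fatou's lemma for weak convergence gives $\int_X\exp(\kappa_0 p)\,d\mu\le\liminf_\alpha\int_X\exp(\kappa_0 p)\,d\mu_\alpha\le I(\theta,c)$, and then the same estimate controls the tail integrals of $\mu$; in particular $\mu\in\mathcal{M}^q_r(X)$.

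The one delicate point — and the main, though quite mild, obstacle — is that the displayed uniform integrability is obtained only over the cofinal set $\{\alpha\colon\alpha\ge\alpha_0\}$ and not over all indices: a weakly convergent net of probability measures need not be uniformly tight, so a bound over the whole index set cannot be expected, and Borell's inequality is of no use for the "early" part of the net. This is harmless. I would apply the proposition following Remark~\ref{rem2.6} to the net $\{\mu_\alpha\}_{\alpha\ge\alpha_0}$, which still converges to $\mu$ in $\tau_{KR}$ and now does satisfy the full uniform integrability hypothesis; the proposition then yields $\mu\in\mathcal{M}^q_r(X)$ and convergence of this tail to $\mu$ in $\tau_{K,q}$. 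Since a net is eventually in a prescribed neighborhood exactly when its tail $\{\alpha\ge\alpha_0\}$ is, the entire net $\{\mu_\alpha\}$ converges to $\mu$ in $\tau_{K,q}$. As all the measures involved are probability measures, Remark~\ref{rem2.6} shows that this is also convergence with respect to the $q$-Kantorovich metrics $d_{K,p,q}$, $p\in\Pi$.
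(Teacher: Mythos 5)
Your argument is correct and follows the route the paper intends: the corollary is exactly the combination of the uniform exponential bound $\int_X\exp(\kappa_0 q)\,d\mu_\alpha\le I(\theta,c)$ established in the proof of Theorem~\ref{mean conc} with the sufficient condition for $\tau_{K,q}$-convergence in the proposition following Remark~\ref{rem2.6}, using Theorem~\ref{KR} to identify weak and $\tau_{KR}$ convergence for probability measures. Your handling of the only delicate point (the bound holds just for $\alpha\ge\alpha_0$, remedied by passing to the cofinal tail of the net) is sound and does not change the substance of the argument.
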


For a Radon probability measure $\mu$ on a locally convex space, we denote by
$\mathcal{P}^d(\mu)$ the set of all $\mu$-measurable polynomials of degree $d\ge 0$, i.e.,
$\mu$-measurable functions possessing versions that are  polynomials of degree  $d$ on $X$  in the usual
algebraic sense (this is equivalent to the property that the restrictions to all affine lines are polynomials of degree~$d$).
For a Gaussian measure, every measurable polynomial of degree  $d$ is the limit almost everywhere and in $L^2$ of a sequence
of polynomials of the form $f(l_1,\ldots,l_n)$, where $f$ is a polynomial of degree  $d$ on $\mathbb{R}^n$ and $l_j$ are continuous
linear functionals. It is not known whether this is true for all logarithmically  concave measures; about measurable
polynomials, see~\cite{B16}.

For measurable polynomials on a  space equipped with a  logarithmically  concave measure two very important
estimates are known with constants independent of the measure. The first one
 (obtained in \cite{Carb}, \cite{Naz} in the finite-dimensional case and extended in \cite{ArKos}
 to the infinite-dimensional case) gives an estimate for small values:
\begin{equation}\label{sm-val}
\mu(x\colon |f(x)|\le r)\|f\|_{L^1(\mu)}^{1/d} \le cd r^{1/d}, \quad f\in \mathcal{P}^d(\mu).
\end{equation}
The second one (see \cite{Bob}, \cite{Bob02}, \cite{ArKos}) gives the equivalence of all $L^p$-norms on $\mathcal{P}^d(\mu)$:
$$
\|f\|_{L^p(\mu)}\le C(p,d)\|f\|_{L^1(\mu)}, \quad f\in \mathcal{P}^d(\mu).
$$

\begin{corollary}
{\rm(i)} Let $\{\mu_\alpha\}$ be a net of logarithmically  concave measures on a sequentially complete locally convex space~$X$ and let
$\nu_\alpha=f_\alpha\cdot\mu_\alpha$ be probability  measures, where $f_\alpha\in \mathcal{P}^d(\mu_\alpha)$
with a common degree~$d$.
If the measures $\nu_\alpha$   converge weakly to a measure $\nu$,
then their means  $m_{\nu_\alpha}$ converge to $m_\nu$.

In addition, if $\{\nu_\alpha\}$ is uniformly tight,
which holds automatically in the  case of a weakly convergent countable sequence on a Fr\'echet space,
then $\{\mu_\alpha\}$ is also uniformly tight and has a limit point~$\mu$ that is a
logarithmically  concave measure, moreover, the measures $\mu$ and $\nu$ are equivalent.

{\rm(ii)} Suppose that $\{\mu_\alpha\}$ is
a uniformly tight family of logarithmically  concave measures  on a locally convex space and for each $\alpha$
there is a measure $\nu_\alpha=f_\alpha\cdot \mu_\alpha$ with $f_\alpha\in \mathcal{P}^d(\mu_\alpha)$. If the family  $\{\nu_\alpha\}$ is bounded
in variation, then it is uniformly tight.
\end{corollary}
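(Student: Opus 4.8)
The plan is to reduce the first assertion of (i) to the results of Section~3 on convergence of barycenters, the real work being the verification of uniform integrability of continuous seminorms with respect to the measures $\nu_\alpha$. First I observe that $\nu$ is a probability measure (integrate $f\equiv 1$), that one may take $f_\alpha\ge 0$ with $\|f_\alpha\|_{L^1(\mu_\alpha)}=1$, and that the equivalence of all $L^p$-norms on $\mathcal{P}^d$ gives $\|f_\alpha\|_{L^2(\mu_\alpha)}\le C(2,d)$ for every~$\alpha$. Now fix a continuous seminorm~$q$. Using the small-value estimate \eqref{sm-val} (with $\|f_\alpha\|_{L^1(\mu_\alpha)}=1$) I pick $r_0>0$ with $\mu_\alpha(f_\alpha\le r_0)<1/8$ for all~$\alpha$; on $\{f_\alpha>r_0\}$ one has $\mu_\alpha\le r_0^{-1}\nu_\alpha$. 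Choosing $c>0$ with $\nu(q\ge c)<r_0/16$ and applying Alexandrov's theorem to the closed set $\{q\ge c\}$, I obtain an index $\alpha_0=\alpha_0(q)$ so that $\nu_\alpha(q\ge c)<r_0/8$, hence $\mu_\alpha(q<c)>3/4$, for $\alpha\ge\alpha_0$. Since $\{q<c\}$ is absolutely convex, Borell's inequality \eqref{conc} now gives $\mu_\alpha(q\ge ct)\le\tau^t$ with $\tau=(1/3)^{1/2}\in(0,1)$ for $t\ge 1$ and $\alpha\ge\alpha_0$, and, exactly as in the proof of Theorem~\ref{mean conc}, this yields $\kappa>0$ and $M<\infty$ with $\int_X\exp(\kappa q)\,d\mu_\alpha\le M$ for all $\alpha\ge\alpha_0$.

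From this uniform exponential bound I get $\sup_{\alpha\ge\alpha_0}\int_{\{q\ge R\}}q^2\,d\mu_\alpha\to 0$ as $R\to\infty$, and then, via $\int_{\{q\ge R\}}q\,d\nu_\alpha=\int_{\{q\ge R\}}q f_\alpha\,d\mu_\alpha\le C(2,d)\bigl(\int_{\{q\ge R\}}q^2\,d\mu_\alpha\bigr)^{1/2}$, that $q$ is uniformly integrable with respect to the tail $\{\nu_\alpha\colon\alpha\ge\alpha_0\}$. In particular $\sup_{\alpha\ge\alpha_0}\int_X q\,d\nu_\alpha<\infty$, so, letting this bound pass to the weak limit and using that $X$ is sequentially complete (and that each $\mu_\alpha$, being logarithmically concave, has all seminorms integrable), all the measures $\nu_\alpha$ and $\nu$ have barycenters and lie in $\mathcal{M}_r^1(X)$. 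Since convergence of a net depends only on its tails, I restrict to the tail and apply Proposition~\ref{p2.4} with the single pseudo-metric associated to~$q$: this gives $\|\nu_\alpha-\nu\|_{K,q}\to 0$, hence $q(m_{\nu_\alpha}-m_\nu)\le\|\nu_\alpha-\nu\|_{K,q}\to 0$ by the inequality recorded before Corollary~\ref{means}. As $q$ was an arbitrary continuous seminorm, $m_{\nu_\alpha}\to m_\nu$; this is precisely Corollary~\ref{means} in the form for nets of probability measures.

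For the second part of (i) I assume in addition that $\{\nu_\alpha\}$ is uniformly tight (automatic for a weakly convergent sequence on a Fr\'echet space, by the standard uniform tightness of weakly convergent sequences of Radon measures on a complete metric space). Running the small-value trick with a compact set instead of a level set, given $\varepsilon>0$ I choose $r_0$ with $cd\,r_0^{1/d}<\varepsilon/2$ and a compact $K$ with $\sup_\alpha\nu_\alpha(X\setminus K)<r_0\varepsilon/2$, so that $\mu_\alpha(X\setminus K)\le\mu_\alpha(f_\alpha\le r_0)+r_0^{-1}\nu_\alpha(X\setminus K)<\varepsilon$; thus $\{\mu_\alpha\}$ is uniformly tight, hence by Prokhorov's theorem has a weak limit point~$\mu$, a Radon probability measure, which is logarithmically concave because this property is inherited by weak limits (reduce to the images $\mu\circ T^{-1}$ under continuous linear maps $T\colon X\to\mathbb{R}^n$ and use that the logarithmically concave probability measures on $\mathbb{R}^n$ form a weakly closed class). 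Passing to a subnet $\mu_{\alpha_\beta}\to\mu$, the bound $\sup_\alpha\|f_\alpha\|_{L^2(\mu_\alpha)}<\infty$ lets me apply Lemma~\ref{lem2} along this subnet to get $\nu\ll\mu$. For the converse I take a compact $C$ with $\nu(C)=0$ and, for $r,\varepsilon>0$, an open $U\supset C$ with $\nu(\overline U)<\varepsilon$ (inner regularity of $\nu$ and a Urysohn function); from $\mu_\alpha(U)\le\mu_\alpha(f_\alpha\le r)+r^{-1}\nu_\alpha(\overline U)\le cd\,r^{1/d}+r^{-1}\nu_\alpha(\overline U)$, together with Alexandrov's theorem for the open set $U$ applied to $\mu_{\alpha_\beta}$ and for the closed set $\overline U$ applied to $\nu_{\alpha_\beta}$, I obtain $\mu(C)\le\mu(U)\le cd\,r^{1/d}+r^{-1}\varepsilon$; letting $\varepsilon\to 0$ and then $r\to 0$ gives $\mu(C)=0$, and inner regularity of $\mu$ upgrades this to $\mu\ll\nu$. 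Hence $\mu$ and $\nu$ are equivalent.

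Part (ii) is short: here $\sup_\alpha\|f_\alpha\|_{L^1(\mu_\alpha)}=\sup_\alpha\|\nu_\alpha\|=:B<\infty$, so the equivalence of polynomial $L^p$-norms gives $\|f_\alpha\|_{L^2(\mu_\alpha)}\le C(2,d)B$, and for any compact $K$ the Cauchy--Schwarz inequality yields $|\nu_\alpha|(X\setminus K)=\int_{X\setminus K}|f_\alpha|\,d\mu_\alpha\le C(2,d)B\,\mu_\alpha(X\setminus K)^{1/2}$, which is uniformly small once $K$ is chosen using the uniform tightness of $\{\mu_\alpha\}$. I expect the main obstacle to be the equivalence $\mu\sim\nu$ in (i): the inclusion $\nu\ll\mu$ is delivered by Lemma~\ref{lem2}, but $\mu\ll\nu$ forces one to push the small-value estimate \eqref{sm-val} through the weak limit, which is exactly where the direction of the weak-convergence inequalities for open and closed sets is delicate; a secondary point requiring care is that the uniform integrability bounds are available only along tails of the net, so every net-convergence conclusion must be drawn tail by tail.
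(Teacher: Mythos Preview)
Your argument is correct and follows the same overall plan as the paper: combine the equivalence of $L^p$-norms on $\mathcal{P}^d$, the small-value estimate \eqref{sm-val}, and Borell's inequality \eqref{conc} to obtain uniform integrability of every continuous seminorm with respect to $\{\nu_\alpha\}$, then invoke Corollary~\ref{means}; your treatment of part~(ii) coincides with the paper's. Two points of execution differ and are worth noting. First, to reach the Borell-type tail bound you first establish $\mu_\alpha(q<c)>3/4$ directly (from $\nu_\alpha(q\ge c)<r_0/8$ together with $\mu_\alpha(f_\alpha\le r_0)<1/8$) and only then apply \eqref{conc} to the log-concave measure~$\mu_\alpha$; the paper instead invokes \eqref{conc} for $\nu_\alpha$ from $\nu_\alpha(q<R)>2/3$, a step that is not immediate since $\nu_\alpha=f_\alpha\mu_\alpha$ need not be logarithmically concave, so your detour is in fact the cleaner justification at this spot. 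Second, for the inclusion $\mu\ll\nu$ the paper simply reapplies Lemma~\ref{lem2} with the roles swapped, writing $\mu_\alpha=f_\alpha^{-1}\cdot\nu_\alpha$ and verifying the lemma's hypothesis via $\mu_\alpha(f_\alpha^{-1}\ge R)=\mu_\alpha(f_\alpha\le R^{-1})\le cd\,R^{-1/d}$ from \eqref{sm-val}; this is shorter than your hands-on Portmanteau computation, though both routes are valid.
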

\begin{proof}
(i) Let us verify that every continuous seminorm $q$ on $X$ is uniformly integrable with respect to the measures~$|\nu_\alpha|$ and also
with respect to the measures $\mu_\alpha$. It suffices to verify the uniform boundedness of the norms $\|q\|_{L^2(\mu_\alpha)}$,
since the equality $\|f_\alpha\|_{L^1(\mu_\alpha)}=1$
implies the uniform follows boundedness of the norms $\|f_\alpha\|_{L^2(\mu_\alpha)}$.
Let $\varepsilon>0$. By \eqref{sm-val} there holds the estimate
$$
\mu_\alpha(f_\alpha\le r)\le cd r^{1/d}.
$$
Pick $r\in (0,1)$ such that $cd r^{1/d}<\varepsilon/2$.
Next we find  $t>1$ such that $2^{-t/2}<\varepsilon r/2$. There is a number  $R>0$ for which
 $\nu(\{q< R\})>2/3$. By weak convergence $\nu_\alpha(\{q< R\})>2/3$ for all $\alpha$ large enough.
We can assume that this is true for all~$\alpha$. By inequality \eqref{conc} we have
$$
\nu_\alpha(\{q\ge tR\})< \varepsilon r/2.
$$
Then
$$
\mu_\alpha (\{q\ge tR\}\cap \{f_\alpha>r\})\le r^{-1} \nu_\alpha (\{q\ge tR\})< \varepsilon/2,
$$
whence
$$
\mu_\alpha (\{q\ge tR\})\le \varepsilon/2+\mu_\alpha (\{f_\alpha\le r\})< \varepsilon.
$$
If the family $\{\nu_\alpha\}$ is uniformly tight, then the uniform tightness of $\{\mu_\alpha\}$ is obvious from~\eqref{sm-val}.

It follows from Lemma \ref{lem2} that the measure $\nu$  is absolutely continuous with respect to~$\mu$.
The absolute continuity of $\mu$ with respect to $\nu$ follows from  the same lemma applied to the
probability measures $f_\alpha\cdot \mu_\alpha$ and the measures $\mu_\alpha$ given by the
 densities $f_\alpha^{-1}$ with respect to $f_\alpha\cdot \mu_\alpha$
(note that $f_\alpha(x)>0$ for $\mu_\alpha$-a.e.~$x$).
These densities satisfy the hypotheses of the lemma, since
$$
\mu_\alpha(x\colon f_\alpha(x)^{-1}\ge R)=\mu_\alpha(x\colon f_\alpha(x)\le R^{-1})\le cd R^{-1/d}
$$
by estimate \eqref{sm-val}.

(ii) The norms $\|f_\alpha\|_{L^2(\mu_\alpha)}$ are uniformly bounded by some number $M$ as explained above.
Hence for every Borel set $B$ we have $|\nu_\alpha|(B)\le M^{1/2}\mu_\alpha(B)^{1/2}$,
which yields the claim.
\end{proof}

It remains unclear in the considered situation with a countable sequence
whether the measures  $\mu_n$ must converge (even if they are  uniformly
tight, it is not clear whether the limit point is unique).
This is unclear even in the  case of Gaussian measures~$\mu_n$ (if they are different).

Stable measures form another important class of probability distributions (see \cite{B07}, \cite{T}, \cite{Z}).
Recall that a Radon probability  measure $\mu$ on a locally convex space $X$ is called stable of order $p\in (0,2]$ if
for every $\alpha>0$ and $\beta>0$ there is a vector $v$ such that
the image of $\mu$ under the mapping $x\mapsto (\alpha^p+\beta^p)^{1/p}x+v$ equals the convolution
of the images of $\mu$ under the homotheties with the coefficients $\alpha$ and $\beta$.
In other words, if $\xi$ and $\eta$ are independent random vectors with distribution $\mu$, then
$\alpha \xi +\beta \eta$  has the same law as $(\alpha^p+\beta^p)^{1/p}\xi+v$.
The case $p=2$ corresponds to Gaussian measures, and this is the only intersection
with the class of logarithmically concave measures. Stable measures of order $p>1$ possess barycenters.
Indeed, as shown in \cite{Acosta75}, in this case all measurable seminorms are integrable, hence
the barycenter exists in the case of a complete space $X$, so it exists in the completion,
but it is readily seen from the definition that it must belong to the original space.
Note also that if a net of measures $\mu_\alpha$ that are stable of orders greater than some $p_1>1$ converges
weakly to a Radon measure $\mu$, then $\mu$ is also stable of some order $p\ge p_1$.
Indeed, it is known (see \cite{DK}) that if all one-dimensional projections of a measure $\nu$ are stable, then
they are stable of the same order $\gamma$, and if $\gamma>1$, then $\nu$ is stable of order $\gamma$.
In order to apply this result from~\cite{DK} we can take a linear topological embedding of $X$ into a suitable power $\mathbb{R}^T$
of the real line and obtain that $\nu$ is stable on~$\mathbb{R}^T$, but then it remains stable on~$X$, because $X$ is
$\nu$-measurable in~$\mathbb{R}^T$ by the Radon property (there is a sequence of compacts sets $K_n$ in $X$ with $\nu(K_n)\to 1$ and
these sets are also compact in~$\mathbb{R}^T$).
Hence
it suffices to consider the one-dimensional case, where there is a countable sequence $\mu_{\alpha_{n}}$
of elements of the original net converging to $\mu$. Passing to a subsequence we can assume that
the orders $p_n$ of $\mu_{\alpha_{n}}$ converge to some $p\in [p_1,2]$.
The Fourier transform of $\mu_{\alpha_{n}}$ has the form (see \cite{Z})
$$
\exp[ita_n -c_n |t|^{p_n}(1- i b_n {\rm sign}( t) \tan (\pi p_n/2))].
$$
It follows that $a_n\to a$, $b_n\to b$, $c_n\to c$, so that the Fourier
transform of $\mu$ has the same form with $(a,c,p,b)$, hence is stable of order~$p$.

\begin{theorem}
Suppose that a net of measures $\mu_\alpha$ that are stable of orders greater than some $p_1>1$ converges
weakly to a Radon measure $\mu$. Then they converge in the Kantorovich topology.
Hence their barycenters converge to the barycenter of $\mu$.
\end{theorem}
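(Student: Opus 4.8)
The plan is to derive the theorem from the sufficient condition of Proposition~\ref{p2.4}: it is enough to show that every continuous seminorm $q$ on $X$ is uniformly integrable with respect to the net, i.e.
$$
\lim_{R\to\infty}\sup_{\alpha}\int_{\{q\ge R\}}q\,d\mu_\alpha=0,
$$
where for $\Pi$ we take the pseudometrics $p_q(x,y)=q(x-y)$, so that $p_q(\,\cdot\,,0)=q$. As noted just before the theorem, the weak limit $\mu$ is again stable of some order $p\ge p_1>1$; hence $\mu$ and all the $\mu_\alpha$ have finite first moment, possess barycenters, and lie in $\mathcal M_r^{\Pi}(X)$ (integrability of measurable seminorms under a stable measure of order greater than~$1$ is due to de~Acosta, see~\cite{Acosta75}). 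Since the $\mu_\alpha$ are probability measures, weak convergence moreover coincides with convergence in $\tau_{KR}$ by Theorem~\ref{KR}. Consequently, once the displayed uniform integrability is established, Proposition~\ref{p2.4} gives convergence in $\tau_K$, and Corollary~\ref{means} (in its version for nets of probability measures) gives $m_{\mu_\alpha}\to m_\mu$; alternatively the last conclusion follows at once from the bound $q(m_{\mu_\alpha}-m_\mu)\le\|\mu_\alpha-\mu\|_{K,q}$ recorded in Section~3.

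Everything thus reduces to a \emph{uniform} tail bound for the $\mu_\alpha$, which here takes over the role played by Borell's inequality~\eqref{conc} in the logarithmically concave case. Fix a continuous seminorm $q$. Choosing $t_0>0$ with $\mu(q<t_0)$ sufficiently close to $1$ and applying Alexandrov's theorem (the set $\{q<t_0\}$ is open), we get $\sup_\alpha\mu_\alpha(q\ge t_0)<\varepsilon_0$ for a small absolute constant $\varepsilon_0$ past some index of the net, and as usual we may assume this for all $\alpha$. By the standard two-sided symmetrization inequalities for $q$ it suffices to treat symmetric measures: the symmetrization of $\mu_\alpha$ (the law of $X-X'$ for independent $X,X'\sim\mu_\alpha$) is again $p_\alpha$-stable and symmetric, and its $q$-tails are comparable to those of $\mu_\alpha$ up to absolute constants, so the shifts present in the non-symmetric case are harmless. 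For symmetric $p_\alpha$-stable $\mu_\alpha$ the decomposability relation gives $\mu_\alpha=(\mu_\alpha^{(n)})^{\ast n}$ for every $n=2^k$, where $\mu_\alpha^{(n)}$ is the image of $\mu_\alpha$ under $x\mapsto n^{-1/p_\alpha}x$; applying L\'evy's inequality to $n$ independent copies with common law $\mu_\alpha^{(n)}$, together with the elementary estimate $(1-s)^n\le(1+ns)^{-1}$, one obtains, for $\lambda=2n^{1/p_\alpha}t_0$,
$$
\mu_\alpha(q\ge\lambda)\le\frac{2\,\mu_\alpha(q\ge t_0)}{n\,\bigl(1-2\,\mu_\alpha(q\ge t_0)\bigr)}\le C\Bigl(\frac{t_0}{\lambda}\Bigr)^{p_\alpha},
$$
and, by monotonicity in $\lambda$, this holds up to an absolute factor for all $\lambda\ge 2t_0$, with $C$ depending only on $\varepsilon_0$. (Alternatively one may quote the known tail estimate for stable measures from~\cite{Acosta75}, \cite{T}, \cite{Z} and simply record that its constant depends only on $\mu_\alpha(q\ge t_0)$.) Since $p_\alpha\ge p_1>1$, these estimates combine into a single bound $\mu_\alpha(q\ge\lambda)\le C'\,t_0^{\,p_1}\lambda^{-p_1}$ valid for all $\lambda$ beyond an $\alpha$-independent threshold.

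It remains to integrate: for $R$ large enough,
$$
\int_{\{q\ge R\}}q\,d\mu_\alpha\le R\,\mu_\alpha(q\ge R)+\int_R^\infty\mu_\alpha(q>t)\,dt\le C'\,t_0^{\,p_1}R^{1-p_1}+\frac{C'\,t_0^{\,p_1}}{p_1-1}\,R^{1-p_1},
$$
and since $p_1>1$ the right-hand side tends to $0$ as $R\to\infty$ uniformly in~$\alpha$. This yields the required uniform integrability of every continuous seminorm, and the reductions of the first paragraph conclude the proof.

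The one genuinely delicate step is the uniform tail bound. For a fixed stable measure the polynomial decay of the tails of a seminorm is classical; the actual content here is that the constant stays bounded along the net, which is precisely what the decomposability-and-L\'evy argument (equivalently, a quantitative reading of de~Acosta's estimate) delivers once $\sup_\alpha\mu_\alpha(q\ge t_0)$ has been controlled by weak convergence. The hypothesis $p_1>1$ is used twice: to keep the symmetrized measures within the class of stable measures and to pass from uniform tail decay to uniform integrability of first moments.
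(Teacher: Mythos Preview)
Your proof is correct and follows the same overall strategy as the paper: use weak convergence and Alexandrov's theorem to control $\sup_\alpha\mu_\alpha(q\ge t_0)$, turn this into a uniform polynomial tail bound $\mu_\alpha(q\ge t)\le C\,t^{-p_1}$ via the structure of stable laws, and then deduce uniform integrability of every continuous seminorm so that Proposition~\ref{p2.4} and Corollary~\ref{means} apply. The only real difference is in how the tail bound is obtained. The paper tracks the constants through de~Acosta's Lemma~3.3 and Theorem~3.1 (passing through the strictly stable case and then the general one), whereas you symmetrize first and run the elementary decomposability-plus-L\'evy argument directly; your route is a bit more self-contained and bypasses the strictly-stable intermediate step, at the price of the (standard) symmetrization reduction. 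Your use of $(1-s)^n\le(1+ns)^{-1}$ to extract $a\le b/(n(1-b))$ from $1-(1-a)^n\le b$ is correct, and the restriction to $n=2^k$ is harmless but unnecessary (for symmetric stable laws the $n$-fold decomposition holds for all $n\ge1$). One small inaccuracy in your closing commentary: symmetrization preserves $p$-stability for \emph{every} $p\in(0,2]$, so the hypothesis $p_1>1$ is in fact used only once, in passing from the tail estimate to uniform integrability of the first moment.
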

\begin{proof}
As explained above, the measure $\mu$ is also stable of some order in $[p_1,2]$.
It suffices to show that for every continuous seminorm $q$ there is a number $r>1$ such that
the integrals of $q^r$ with respect to $\mu_\alpha$ are uniformly bounded for $\alpha$ larger than some $\alpha_0$.

We fix a number $\delta$ such that $2^{-1/2}<\delta<1$ and then
pick $k>1$ such that $(2^{1/2}\delta)^k>3$.
Next, set
$$
\varrho=\prod_{n=1}^\infty (1+\delta ^n).
$$
It is seen from the proof of \cite[Lemma~3.3]{Acosta75}
that if $\mu$ is a stable measure of order $p\ge p_1$ and $q$ is a measurable seminorm such that
$\mu(q<1)>3/4$, then
$$
\sum_{n=1}^\infty \mu(q>\beta^n)\le 4k , \ \beta=2^{1/p}\delta.
$$
Indeed, in the notation of the cited lemma one has $\beta (\beta^k -2^{1/p})>1$,
because $2^{1/p}\le 2$ and $\beta\ge 2^{1/2}\delta$ due to the bound $p\le 2$. Hence
$$\gamma=\mu(q\le \beta(\beta^k-2^{1/p}))>3/4,$$
which yields the bound $2\gamma/(2\gamma-1)<4$, but in the cited lemma
the series above is estimated by $k2\delta/(2\delta-1)$.

Next, an easy inspection of the proof of \cite[Theorem~3.1]{Acosta75} shows that for a strictly stable measure one has
$$
\mu(q>t)\le C t^{-p}, \ t>0,
$$
where
\begin{align*}
C &\le 2C_n''\varrho^p=2\varrho^p/C_n'=2\varrho^p/\prod_{n=1}^\infty \mu(q\le \beta^n)
\\
&=
2\varrho^p/\prod_{n=1}^\infty (1-\mu(q>\beta^n))
\le
2\varrho^p\exp \Bigl(2\sum_{n=1}^\infty \mu(q>\beta^n)\Bigr)
\le 2\varrho^p\exp (8k).
\end{align*}
Finally, in the general case
$$
\mu(q>t)\le 2C (t-1)^{-p}\le 4\varrho^p\exp (8k)(t-1)^{-p} , \, t>1.
$$
Clearly, in our situation we can assume that $\mu(q<1)>3/4$, so $\mu_\alpha(q<1)>3/4$ for all $\alpha$ sufficiently large.
It follows that we have
$$
\mu_\alpha(q>t)\le 8\varrho^{p_\alpha}\exp (8k)t^{-p_1}, \, t>2.
$$
This estimate completes the proof.
\end{proof}

\begin{corollary}
In the previous theorem one has also convergence in the topology $\tau_{K,q}$ with any $q< p_1$ introduced
in Remark~{\rm\ref{rem2.6}}.
\end{corollary}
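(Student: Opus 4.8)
The plan is to obtain the corollary from the proposition stated immediately after Remark~\ref{rem2.6}: if a net of probability measures converges in $\tau_{KR}$ and, for every pseudo-metric $p\in\Pi$,
$$
\lim_{R\to\infty}\sup_\alpha\int_{\{p\ge R\}}p(x,x_0)^q\,|\mu_\alpha|(dx)=0,
$$
then $\mu\in\mathcal M^q_r(X)$ and the net converges in $\tau_{K,q}$. In our setting $\mu_\alpha$ and $\mu$ are Radon probability measures converging weakly, hence they converge in $\tau_{KR}$ by Theorem~\ref{KR}. So the whole task is to check the displayed uniform integrability, for the fixed exponent $q\in[1,p_1)$ and for the pseudo-metric $(x,y)\mapsto p(x-y)$ attached to an arbitrary continuous seminorm $p$ on $X$, with $x_0=0$.

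For this I would simply recycle the tail estimate produced inside the proof of the preceding theorem. That proof shows that for every continuous seminorm (call it $p$ here) there is an index $\alpha_0$, depending on $p$, such that
$$
\mu_\alpha(p>t)\le 8\varrho^{p_\alpha}\exp(8k)\,t^{-p_1}\le C_p\,t^{-p_1},\qquad t>2,\ \alpha\ge\alpha_0,
$$
where $\varrho,k$ are the constants fixed in that proof and $C_p:=8\varrho^2\exp(8k)$ is independent of $\alpha$, using $\varrho\ge1$ and $p_\alpha\le2$. Plugging this polynomial tail into the identity
$$
\int_{\{p\ge R\}}p^q\,d\mu_\alpha=R^q\,\mu_\alpha(p\ge R)+q\int_R^\infty t^{q-1}\mu_\alpha(p>t)\,dt
$$
and integrating $t^{q-1}t^{-p_1}$ near infinity, which converges precisely because $q<p_1$, gives a bound of order $R^{q-p_1}$ that is uniform in $\alpha\ge\alpha_0$ and tends to $0$ as $R\to\infty$. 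This is exactly the required uniform integrability; it also shows $p^q\in L^1(\mu_\alpha)$ for $\alpha\ge\alpha_0$, while for the remaining indices $p^q$ is $\mu_\alpha$-integrable anyway since $\mu_\alpha$ is stable of order $p_\alpha>p_1>q$, so $\{\mu_\alpha\}\subset\mathcal M^q_r(X)$.

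I do not expect a genuine obstacle; the only point needing a word is that the tail bound, hence the uniform integrability, is available only from the index $\alpha_0=\alpha_0(p)$ onward, with $\alpha_0$ depending on the seminorm. This is harmless: convergence in $\tau_{K,q}$ is tested seminorm by seminorm through the seminorms $K_{p,q}$, and each such test concerns only a tail of the net, so for a fixed $p$ one passes to the cofinal tail $\{\alpha\ge\alpha_0(p)\}$, on which the hypotheses of the proposition following Remark~\ref{rem2.6} hold verbatim; that proposition then yields $K_{p,q}(\mu_\alpha-\mu)\to0$ along this tail, hence along the whole net. Carrying this out for each $p\in\Pi$ finishes the proof, and en route the same proposition gives $\mu\in\mathcal M^q_r(X)$.
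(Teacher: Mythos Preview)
Your proof is correct and is precisely the argument the paper leaves implicit: the corollary is stated without proof, and your derivation---feeding the uniform tail bound $\mu_\alpha(p>t)\le C\,t^{-p_1}$ from the preceding theorem into the proposition following Remark~\ref{rem2.6}---is the intended route. Your handling of the dependence of $\alpha_0$ on the seminorm is also the right observation, since convergence in $\tau_{K,q}$ is checked one seminorm $K_{p,q}$ at a time.
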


Konstantin Afonin: wert8394@gmail.com

Vladimir Bogachev: vibogach@mail.ru

\end{document}